\tikzstyle{vertex}=[circle, draw, inner sep=0pt, minimum size=6pt]
\newtheorem{thm}{Theorem}[section]
\newtheorem{lem}[thm]{Lemma}
\newtheorem{dfn}[thm]{Definition}
\newtheorem{pro}[thm]{Proposition}
\newtheorem{rem}[thm]{Remark}
\def\a{\bar {a}}
\def\x{\bar {x}}
\def\e{\mathbf {e}}
\def\Ac{\mathcal {A}}\def\Bc{\mathcal {B}}
\def\Cc{\mathcal {C}}
\def\Rn{\mathbb R}
\begin{document}

\begin{frontmatter}

\title{Uniform logical proofs for Riesz representation theorem, Daniell-Stone theorem and Stone's representation theorem for probability algebras}

\author[aut,ipm]{Alireza Mofidi}
\ead{mofidi@aut.ac.ir}

\address[aut]{Department of Mathematics and Computer Science, Amirkabir University of Technology (Tehran Polytechnic), \\ Hafez Avenue, 15194 Tehran, Iran}

\address[ipm]{School of Mathematics, Institute for Research in Fundamental Sciences (IPM),
\\ P.O. Box: 19395-5746, Tehran, Iran}

\begin{abstract}

Riesz representation theorem, Daniell-Stone theorem for Daniell integrals and Stone's representation theorem for probability and measure algebras are three important classical results in analysis concerning existence of measures with certain properties.
Many proofs of these theorems can be found in the literature of analysis, from elementary ones which use ordinary techniques from measure theory, to more sophisticated ones, such as those employing techniques from nonstandard analysis, in particular for Riesz representation theorem.
In this paper, as the first goal, we give new proofs for all these three theorems.
Our proofs have a mild logical flavor and are uniform in the sense that they are all based on the same general idea and rely on the application of the same technical tool from logic to measure theory, namely logical compactness theorem. In fact, as the second goal of the paper, we try to reveal more the power of logical methods in analysis in particular measure theory, and make stronger connections between analysis and logic.
We use the setting of "integration logic" which is a logical framework (and one of the forms of probability logics) for studying measure and probability structures by logical means.
Indeed, we elaborate this setting and use its expressive power and a version of compactness theorem holding in it to show its application in measure theory by giving new proofs for the above-mentioned measure existence theorems.
As mentioned, an advantage of these proofs is that they are all given in a uniform way since they are all based on the logical compactness theorem.
The paper is mostly written for
general mathematicians, in particular
the people active in analysis or logic as the main audience.
So it is self-contained and the reader does not need to have any advanced prerequisite knowledge from logic or measure theory.

\end{abstract}

\begin{keyword}

Riesz representation theorem \sep
Daniell-Stone theorem for Daniell integrals \sep Stone's representation theorem for probability algebras \sep logical compactness theorem \sep integration logic \sep measure existence theorems.

{\sc MSC codes: 28C05 \sep 28A60 \sep 03C98 \sep 03C65.}

\end{keyword}

\end{frontmatter}

\section{Introduction}\label{sectionintroduction}

There are several results in analysis concerning existence of measures with certain properties.
Riesz representation theorem, Daniell-Stone theorem for Daniell integrals and Stone's representation theorem for probability algebras
are some remarkable examples of such measure existence results.
Many proofs for these classical theorems, in particular for Riesz representation theorem, have been discovered by different methods so far.
It is worth mentioning that among various proofs of Riesz representation theorem, many of them, such as the ones in the papers \cite{GarlingAshortproofRiesz}, \cite{GarlingAnothershortproofRiesz}, \cite{HartigTheRieszrepresentationrevisited}, \cite{RossYetanotherproof} and \cite{ZivaljevicALoebmeasureapproachtoRiesz} are using ideas from
outside of classical analysis for example from nonstandard analysis.
On the other hand, there are many instances that tools from mathematical logic are used for studying objects or theories from analysis, probability theory, dynamical systems, etc.
For example in papers such as \cite{ModeltheoryofoperatoralgebrasIFarahHartSherman}, operator algebras have been studied by logical (model theoretic) means.
Also in \cite{FajardoModeltheoryofstochasticprocesses}, an extensive logical investigation of stochastic processes has been carried out.
Furthermore, in \cite{GoldbringTowsnerAnapproximatelogicformeasures} logic gets involved with dynamical systems for giving a proof of the Furstenberg correspondence between finite sets and dynamical systems.
During the course of our investigation in this paper we mainly pursue two goals.
One of them is to give some relatively simple, uniform new proofs with a mild logical flavor for all three above-mentioned classical theorems.
The second goal of the paper is to highlight and elaborate the application of logical tools in the realms of analysis in particular measure theory and make stronger connections between analysis and logic.
In fact, these proofs, beside the fact that are new proofs for some classical theorems which might be of interest of its own right, are indicating
the power of logical methods in measure theory.
Our proofs can be considered as some applications of a particular logical setting, namely "integration logic" (or integral logic), originally introduced in the works of Keisler and Hoover as a setup for dealing with probability and measure spaces by logical means.
Although the methods of the proofs in this paper involves tools from logic,
the reader is not required to be a logician or have any significant amount of knowledge from logic 
to follow the proofs or even apply their ideas for possibly proving similar result.
In fact, 
regarding the logical techniques used in the proofs, one only needs to be able to formalize the problem in 
a certain logical framework by using a very mild language which we explain later. Then, the rest of the logical part of the proof will be automatically handled by a logical machinery in behind, namely the compactness theorem. 
The paper is self-contained and all prerequisites from logic and measure theory are explained in it.

We proceed by explaining more technically about various logical approaches to structures from mathematics in particular analysis.
A usual trend in mathematical logic is to study mathematical objects by 
logical means.
For example, algebraic structures are usually studied using
a classical logical setting called first order logic.
To study structures outside algebra such as measure theory, one usually considers other sorts of logics. 
In fact, there are several ways for incorporating measure and probability in logic.
Probability logics, such as the ones introduced in 
\cite{GoldbringTowsnerAnapproximatelogicformeasures},
\cite{HooverProbabilitylogic}, \cite{KeislerProbabilityquantifiers},
\cite{KuyperTerwijnModeltheorymeasurespacesandprobabilitylogic}, \cite{RaskovicDordeviProbabilityquantifiersoperators} and 
\cite{TerwijnProbabilisticlogicandinduction}, are among various logical frameworks designed to deal with probability and measure structures.
Probability logics have wide range of connections to many other topics such as model theory (see \cite{KeislerProbabilityquantifiers}, \cite{KuyperTerwijnModeltheorymeasurespacesandprobabilitylogic}), combinatorics and dynamical systems (see \cite{GoldbringTowsnerAnapproximatelogicformeasures}), PAC-learning (see \cite{TerwijnProbabilisticlogicandinduction}), etc.
In 
\cite{KeislerProbabilityquantifiers}, an interesting form of probability logics,
called integration logic, is investigated which enables one to use integral operation as a logical quantifier.
In \cite{BagheriPourmahdianThelogicofintegration}, a detailed presentation of
integration logic
is given.
It is worth mentioning that in \cite{MofidiBagheriQuantifiedunivultraproduct}, integration logic is represented as a specific example of a more abstract framework developed with a viewpoint close to functional analysis.
As mentioned earlier, as one of the targets of this paper, we use the framework of integration logic and find concrete applications of it by giving new logical proofs for the three above-mentioned classical theorems.
Using suitable logical frameworks might sometimes enable one to provide uniform proofs with similar techniques for seemingly different theorems. It it indeed the case in this paper.
An advantage of the logical methods of prove we employ in this work is that the results are obtained as the consequences 
(of course not directly, but after putting some additional efforts) 
of a single fundamental fact, namely logical compactness theorem, and that, due to this, the proofs are uniform.
There are many facts in analysis that can be considered in the way that we present in this paper.
But, we are selective and just try to 
reveal the power of some logical
methods in this field.
In fact, methods and strategies of the proofs here (which rely on using of logical compactness theorem) seem to be more general than our results 
and are possibly applicable (to some degree) in various measure existence results.
Although the compactness theorem is stated in
the logic framework, it is easily used in practice.
Compactness theorem in a logical setting roughly states that if we have a family of properties formally stated in that setting and every finite subset of them is satisfied in some structure, then there is a structure that satisfies all of them together.
In fact, there are many interesting real analytic notions or claims that can be decomposed to an infinite number of simple finitary statements. Then, by compactness theorem, the truth of the intended claim is reduced to satisfiability of every finite number of these statements.
Compactness theorem is essentially an existence theorem.
Hence, its applications are so too. In the chapters, we will explain by detail how to use it to prove the mentioned classical existence theorems.

The paper is mostly written for general mathematicians, in particular the people active in analysis or logic as the main audience.
For those readers who are not familiar with logic, we give a general picture of how logic and compactness theorem play role and also roughly explain the steps of the proofs.
The method used is similar to the easy and well-known applications of the classical logical compactness theorem 
in algebra.
For example, in order to show (by a known and old application of the compactness theorem of a classical logical framework called first order logic) that an order relation on a partially ordered set $A$ can be extended to a total order, we first extend the atomic diagram of $A$ (which is, roughly speaking, the set of all first order properties holding in $A$) with the linearity axiom and denote it by $T$. Then, we prove that $T$ is finitely satisfiability (i.e. every finite subset of it is satisfiable in some structure). Then, we apply compactness theorem to obtain a structure $B$ satisfying all expressions in $T$ together. 
But since $B$ satisfies properties mentioned in $T$, it is a linearly ordered extension of $A$. So, in the final step, we push the resulting order from $B$ to $A$. 
In this paper, this procedure is adopted for dealing with measure structures.
But, the arguments are a bit more complicated and require some elementary analytic details. 
More precisely, in the first step, we express (by logical expressions in integration logic) some properties of a measure structure we need to obtain.
These expressions are very close to ordinary ways in mathematics to express properties of measure spaces and form a possibly infinite family of expressions which we call that a theory $T$.
Then in the second step, we prove the finitely satisfiability of $T$, which means that for every finite subset $T'$ of $T$ we find a model of $T'$, which is loosely speaking, a measure structure satisfying every expression in $T'$.
In the third step, we use the logical compactness theorem (which is the main use of logic in this paper) holding in integration logic to conclude from finitely satisfiability that $T$ has a model, which roughly means that there exists a measure structure that satisfies all expressions in $T$.
The above three steps would be enough for the proof of Stone's representation theorem for probability algebras.
But in the proof of Daniell-Stone theorem and Riesz representation theorem, we will need also a forth step 
to induce the measure obtained in the third step on the initial space we started with. This leads us to find a suitable measure on the initial structure as was desired.
Having this explanation 
in mind, the reader who is solely interested in measure theoretic aspects can skip the logical part of the paper and directly goes to the proofs.

Presentation of the paper is as follows. 
In Subsection \ref{prelimmeasureintlog}, 
we briefly review basic measure theoretic concepts
and give a concise introduction to the integration logic. 
Then, In Subsection \ref{SectionpreliminaryLemmas}, we state some preliminary lemmas we will need later in proofs of the main results. 
In Section \ref{applications}, which is the main part of the paper, we give several instances of real analytic notions expressible in integration logic and 
combine them with the power of 
the logical compactness theorem to give new proofs for the mentioned classical measure existence theorems.

\section{Preliminaries}\label{prelim}
\subsection{Preliminaries from measure theory and logic}\label{prelimmeasureintlog}
We review some preliminaries from measure theory.
A (Boolean algebra) measure on a Boolean algebra $\Bc$ of subsets of $M$ is a finitely
additive real-valued function $\mu:\Bc\rightarrow[0,\infty]$ such
that $\mu(\emptyset)=0$ and for any countable sequence $A_k\in \Bc$
of disjoint sets for which $\cup_kA_k\in\Bc$, one has
$\mu(\bigcup_kA_k)=\sum_k\mu(A_k)$.
If $\Bc$ is a $\sigma$-algebra, $\mu$ is called a measure.
Now we recall the definition of subspace measures. 
For any measure space $(M,\Bc,\mu)$, the outer measure $\mu^*$ on $M$ is defined by
$\mu^*(X)=\inf\{\mu(A)|\ X\subseteq A\in\Bc\}$ for every $X \subseteq M$.
If $N\subseteq M$, then $\Bc_N=\{A\cap N|\ A\in \Bc\}$ forms a
$\sigma$-algebra of subsets of $N$ and the restriction of
$\mu^*$ to it, denoted by $\mu_N$, is a measure.
Indeed, elements of $\Bc_N$ are $\mu^*|_N$-measurable.
$\mu_N$ is called the {\em subspace measure} on $N$.
If $f:M\rightarrow \Rn$ is measurable, by
$\int_N f|_N$ is meant $\int_N (f|_N)d\mu_N$ where $f|_N$ is the notation for restriction of $f$ to $N$.

\begin{pro}\label{Fremlin subspace1} \emph{(see \cite{FremlinMeasuretheoryvol2}, Subsection 214)}
Let $(M,\Bc,\mu)$ be a measure space, $N\subseteq M$ and $f$ an
integrable function on $M$. Then the followings hold.

(a) $f|_N$ is $\mu_N$-integrable, and $\int_N f\leqslant\int f$ if $f$ is
nonnegative.

(b) If either $N$ is of full outer measure in $M$ or $f$ is zero
almost everywhere on $M-N$, then $\int_N f|_N=\int_Mf$.
\end{pro}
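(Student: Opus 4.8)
The plan is to reduce everything to the behaviour of the subspace measure $\mu_N=\mu^*|_{\Bc_N}$ on indicator and simple functions, and then transport the conclusion to a general integrable $f$ by the standard monotone-approximation machinery. First I would record the measurability of the restriction: since $f$ is $\Bc$-measurable, for every real $c$ one has $\{x\in N: f|_N(x)>c\}=\{x\in M: f(x)>c\}\cap N$, which lies in $\Bc_N$ by the definition of $\Bc_N$; hence $f|_N$ is $\Bc_N$-measurable. Writing $f=f^+-f^-$, it suffices to treat nonnegative $f$, because $f^+$ and $f^-$ are nonnegative and integrable and the two claimed statements are additive under this decomposition.

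For (a), the key computation is at the level of a single measurable set: if $A\in\Bc$ then $A\cap N\in\Bc_N$, and since $A\cap N\subseteq A$ and $\mu^*$ is monotone with $\mu^*(A)=\mu(A)$, we get $\int_N\chi_A|_N\,d\mu_N=\mu_N(A\cap N)=\mu^*(A\cap N)\leqslant\mu(A)=\int_M\chi_A$. By linearity this extends to every nonnegative simple function. Then I would choose an increasing sequence $s_n\uparrow f$ of nonnegative simple functions; since $s_n|_N\uparrow f|_N$ pointwise on $N$, the monotone convergence theorem applied on $(N,\Bc_N,\mu_N)$ and on $(M,\Bc,\mu)$ yields $\int_N f|_N=\lim_n\int_N s_n|_N\leqslant\lim_n\int_M s_n=\int_M f$. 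Applying this bound to $|f|$ gives $\int_N|f|\leqslant\int_M|f|<\infty$, so $f|_N$ is $\mu_N$-integrable, and the displayed inequality is exactly the asserted bound for nonnegative $f$.

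For (b), having the inequality $\leqslant$ from (a), I only need the reverse inequality, which I would again establish at the level of the simple functions $s_n$ above. The point is that under either hypothesis the inequality $\mu^*(A\cap N)\leqslant\mu(A)$ used in (a) becomes an equality on the relevant sets. If $N$ has full outer measure, then $\mu^*(A\cap N)=\mu(A)$ for every $A\in\Bc$, so $\int_N s_n|_N=\int_M s_n$ directly. If instead $f=0$ almost everywhere on $M-N$, then since $0\leqslant s_n\leqslant f$ the set $\{s_n\neq0\}\setminus N$ is contained in $\{f\neq0\}\setminus N$ and is therefore $\mu^*$-negligible; hence for each measurable set $A$ on which $s_n$ takes a nonzero value we have $\mu^*(A\setminus N)=0$, and subadditivity forces $\mu(A)\leqslant\mu^*(A\cap N)+\mu^*(A\setminus N)=\mu^*(A\cap N)$, again giving equality. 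In both cases $\int_N s_n|_N=\int_M s_n$ for all $n$, and passing to the limit by monotone convergence yields $\int_N f|_N=\int_M f$ for nonnegative $f$; the signed case then follows by applying this to $f^+$ and $f^-$ separately.

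The main obstacle I anticipate is the bookkeeping in (b) rather than any deep idea: because $N$ need not be $\mu$-measurable, one cannot simply split $\int_M f=\int_N f+\int_{M-N}f$, and so the two hypotheses must each be converted into the single usable statement that $\mu^*(A\cap N)=\mu(A)$ on the supports of the approximating simple functions. Care is also needed to confirm that ``full outer measure'' is indeed equivalent to $\mu^*(A\cap N)=\mu(A)$ for all $A\in\Bc$, and that the negligibility in the second hypothesis is interpreted relative to the outer measure $\mu^*$; once these points are pinned down, the monotone convergence step is routine.
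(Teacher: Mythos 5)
Your proof is correct, and since the paper only quotes this proposition from Fremlin (Subsection 214) without supplying its own proof, there is no in-paper argument to diverge from: the route you take (indicators, then nonnegative simple functions, then monotone limits, with the signed case handled via $f^+-f^-$) is the standard one and is essentially how the cited source proceeds. The one point you leave unverified --- that ``full outer measure'' gives $\mu^*(A\cap N)=\mu(A)$ for every $A\in\mathcal{B}$ --- is immediate in the finite-measure setting in which the paper applies the proposition, since subadditivity yields $\mu(M)=\mu^*(N)\leqslant\mu^*(A\cap N)+\mu^*(A^c\cap N)\leqslant\mu(A)+\mu(A^c)=\mu(M)$ and forces equality throughout; for $\mu(M)=\infty$ one must indeed take that identity (equivalently, every measurable subset of $M\setminus N$ is null) as the definition of full outer measure, since $\mu^*(N)=\mu(M)=\infty$ alone would make part (b) false (e.g.\ $N=[0,\infty)$ in $\mathbb{R}$ with Lebesgue measure and $f=\chi_{[-1,0]}$).
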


The following theorem is a very useful method for constructing the measures by extension.

\begin{thm}\label{caratheodoryextensiontheorem} \emph{(Carath\'eodory extension theorem)}
Let $\mu$ be a finite measure on a Boolean algebra $\mathcal{B}$ of subsets of
$M$. Then $\mu$ has an extension $\bar\mu$ to $\sigma(\mathcal{B})$ (the $\sigma$-algebra generated by $\mathcal{B}$).
Moreover the extension constructed in this way is unique.
\end{thm}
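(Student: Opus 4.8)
The plan is to construct the extension via the classical outer-measure machinery and then settle uniqueness by a Dynkin-system argument that exploits finiteness of $\mu$. First I would define the outer measure $\mu^*$ on the full power set of $M$ by
$$\mu^*(A)=\inf\left\{\sum_{n=1}^{\infty}\mu(E_n):\ A\subseteq\bigcup_{n=1}^{\infty}E_n,\ E_n\in\Bc\right\}.$$
A routine verification shows that $\mu^*(\emptyset)=0$, that $\mu^*$ is monotone, and that $\mu^*$ is countably subadditive, so that $\mu^*$ is genuinely an outer measure on $M$.

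Next I would invoke Carath\'eodory's criterion, calling a set $A\subseteq M$ \emph{measurable} when $\mu^*(E)=\mu^*(E\cap A)+\mu^*(E\setminus A)$ for every $E\subseteq M$. The standard structural lemma then gives that the family $\mathcal{M}$ of all measurable sets is a $\sigma$-algebra and that the restriction $\mu^*|_{\mathcal{M}}$ is countably additive. With this in hand the argument reduces to two facts tying $\mu^*$ back to $\mu$: (i) every element of $\Bc$ is measurable, whence $\Bc\subseteq\mathcal{M}$ and therefore $\sigma(\Bc)\subseteq\mathcal{M}$; and (ii) $\mu^*$ agrees with $\mu$ on $\Bc$. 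Setting $\bar\mu=\mu^*|_{\sigma(\Bc)}$ then yields a measure on $\sigma(\Bc)$ extending $\mu$.

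The main obstacle is the nontrivial half of fact (ii), namely the inequality $\mu^*(A)\geqslant\mu(A)$ for $A\in\Bc$. Given any cover $A\subseteq\bigcup_n E_n$ with $E_n\in\Bc$, one must show $\mu(A)\leqslant\sum_n\mu(E_n)$; this is precisely the point where the countable additivity of $\mu$ on $\Bc$ (as opposed to mere finite additivity) is indispensable. I would disjointify by setting $F_n=E_n\setminus\bigcup_{k<n}E_k\in\Bc$, so that $A=\bigcup_n(A\cap F_n)$ is a disjoint union within $\Bc$, and then apply countable additivity together with monotonicity of $\mu$ on the Boolean algebra. The reverse inequality $\mu^*(A)\leqslant\mu(A)$ is immediate from the one-term cover $A$ itself, and fact (i) follows by a direct estimate of $\mu^*(E\cap A)+\mu^*(E\setminus A)$ against $\mu^*(E)$ using a near-optimal cover of $E$.

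Finally, for uniqueness, suppose $\bar\mu_1$ and $\bar\mu_2$ are measures on $\sigma(\Bc)$ both agreeing with $\mu$ on $\Bc$. Since $\mu$ is finite we have $\bar\mu_1(M)=\bar\mu_2(M)=\mu(M)<\infty$, so the collection $\{A\in\sigma(\Bc):\bar\mu_1(A)=\bar\mu_2(A)\}$ is a Dynkin ($\lambda$-)system containing the $\pi$-system $\Bc$. By Dynkin's $\pi$-$\lambda$ theorem it contains $\sigma(\Bc)$, forcing $\bar\mu_1=\bar\mu_2$ and completing the proof.
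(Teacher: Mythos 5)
Your proposal is correct and follows exactly the outer-measure/Carath\'eodory-measurability construction that the paper itself sketches immediately after the statement (the paper cites the theorem as classical and only outlines this construction), with the uniqueness handled by the standard Dynkin $\pi$-$\lambda$ argument, which is where the finiteness hypothesis is genuinely needed. The one point worth being explicit about, and which you correctly flag, is that the paper's notion of ``measure on a Boolean algebra'' is a premeasure (countably additive whenever the disjoint union happens to lie in $\Bc$), and your disjointification step uses precisely this and nothing more.
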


Carath\'eodory's construction is usually divided into two parts.
First, one extends any measure $\mu$ on $(M,\mathcal{B})$ to an outer measure
by defining
$\mu^*(E)=\inf\big\{\sum_k\mu(A_k):\ \ E\subseteq\bigcup_{k < \omega}A_k, \ \ A_k\in \mathcal{B} \ \big\}$
for each $E\subseteq M$.
Then, one defines a complete measure on $M$ by restricting $\mu^*$ to
$\mu^*$-measurable sets, i.e. sets $A\subseteq M$ for which
$\mu^*(E)=\mu^*(E\cap A)+\mu^*(E-A)$ for every $E\subseteq M.$
Then, every sets in $\sigma(\mathcal{B})$ is $\mu^*$-measurable. Carath\'eodory extension
is in particular used to define the product measures $\mu^n$ on $M^n$.
The domain of $\mu^n$ is the the smallest $\sigma$-algebra $\Bc^n$ generated by
the rectangles $A_1\times\ldots\times A_n$ where each $A_i$ is measurable.

Diagonal sets are usually non measurable in the product measure. Since these sets are important in logic, one tries to add them as measurable sets. Let $\Bc^{(n)}$ be the $\sigma$-algebra generated by $\Bc^n$ and the diagonal subsets of $M^n$.

\begin{pro} \label{diagonal} \emph{(see \cite{KeislerProbabilityquantifiers})}
Let $(M,\Bc,\mu)$ be a measure space of finite measure such that
every singleton is measurable. Then, there is a unique measure
$\mu^{(n)}$ on the $\sigma$-algebra $\Bc^{(n)}$ of subsets of $M^n$
generated by the $n$-fold rectangles and the diagonals $D_{ij}^n$
which extends $\mu^n$ and such that for any $i\neq j$,
$\mu^{(n)}(D_{ij}^n)=\sum_{x\in M}\mu(\{x\})^2.$
Moreover, for any $X\in\Bc^{(n)}$, there is a $\mu^n$-measurable
set $U$ such that $\mu^{(n)}(X\Delta U)=0$.
\end{pro}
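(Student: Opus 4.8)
The plan is to realize $\mu^{(n)}$ as the restriction of the completion of $\mu^n$ and to control the diagonals by a covering argument. Throughout write $A=\{x\in M:\mu(\{x\})>0\}$ for the set of atoms and $B=M\setminus A$. Since $\mu$ is finite, $A$ is countable and $\sum_{x\in M}\mu(\{x\})^2\leqslant\mu(M)\sup_{x}\mu(\{x\})<\infty$, so the value to be assigned to each diagonal is finite and well defined. For a fixed pair $i\neq j$ I would split the diagonal as $D_{ij}^n=D_{ij}^{\mathrm{at}}\cup D_{ij}^{\mathrm{na}}$ (a disjoint union), where $D_{ij}^{\mathrm{at}}=\{x\in M^n:x_i=x_j\in A\}$ and $D_{ij}^{\mathrm{na}}=\{x\in M^n:x_i=x_j\in B\}$. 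The atomic part already lies in $\Bc^n$: it is the countable disjoint union over $a\in A$ of the rectangles $\{x:x_i=x_j=a\}$, so it is $\mu^n$-measurable and its $\mu^n$-measure is $\mu(M)^{n-2}\sum_{x\in M}\mu(\{x\})^2$, which equals the stated value $\sum_{x\in M}\mu(\{x\})^2$ in the intended probability setting $\mu(M)=1$.

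The heart of the argument, and the step I expect to be the main obstacle, is the claim that the non-atomic diagonal is $(\mu^n)^*$-negligible, i.e. $(\mu^n)^*(D_{ij}^{\mathrm{na}})=0$. I would prove this by a covering argument. Since $\mu$ restricted to $B$ is atomless, for each $m$ one can partition $B=\bigcup_{k=1}^{m}B_k$ into disjoint measurable pieces with $\mu(B_k)\leqslant\mu(B)/m$, using the standard splitting property of atomless measures. As $x_i=x_j\in B$ forces the two coordinates into a common piece, $D_{ij}^{\mathrm{na}}\subseteq\bigcup_{k=1}^{m}\{x:x_i\in B_k,\ x_j\in B_k\}$, and the right-hand side is a finite union of rectangles of total $\mu^n$-measure at most $\sum_{k=1}^{m}\mu(B_k)^2\mu(M)^{n-2}\leqslant(\mu(B)^2/m)\,\mu(M)^{n-2}$, which tends to $0$ as $m\to\infty$. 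Hence $(\mu^n)^*(D_{ij}^{\mathrm{na}})=0$, so $D_{ij}^{\mathrm{na}}$ lies in the completion of $\Bc^n$ and has completion-measure $0$.

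For existence I would then let $\overline{\Bc^n}$ and $\overline{\mu^n}$ denote the completion of $(M^n,\Bc^n,\mu^n)$. By the previous step every $D_{ij}^{\mathrm{na}}$, hence every $D_{ij}^n=D_{ij}^{\mathrm{at}}\cup D_{ij}^{\mathrm{na}}$, belongs to $\overline{\Bc^n}$; since $\overline{\Bc^n}$ is a $\sigma$-algebra containing $\Bc^n$ and all the diagonals, it contains $\Bc^{(n)}$. Defining $\mu^{(n)}$ as the restriction of $\overline{\mu^n}$ to $\Bc^{(n)}$ yields a measure extending $\mu^n$ with $\mu^{(n)}(D_{ij}^n)=\mu^n(D_{ij}^{\mathrm{at}})+0=\sum_{x\in M}\mu(\{x\})^2$, as required.

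Finally, I would handle the approximation clause and uniqueness together. Put $G=\bigcup_{i<j}D_{ij}^{\mathrm{na}}\in\Bc^{(n)}$, a $\mu^{(n)}$-null set, and consider $\mathcal N=\{Z\in\Bc^{(n)}:\exists\,U\in\Bc^n\text{ with }Z\Delta U\subseteq G\}$. Using $Z^{c}\Delta U^{c}=Z\Delta U$ and the identity $(\bigcup_kZ_k)\Delta(\bigcup_kU_k)\subseteq\bigcup_k(Z_k\Delta U_k)$, one checks directly that $\mathcal N$ is a $\sigma$-algebra; it contains $\Bc^n$ (take $U=Z$) and every $D_{ij}^n$ (take $U=D_{ij}^{\mathrm{at}}$, so $D_{ij}^n\Delta U=D_{ij}^{\mathrm{na}}\subseteq G$), hence $\mathcal N=\Bc^{(n)}$. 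For such $Z,U$ we get $\mu^{(n)}(Z\Delta U)\leqslant\mu^{(n)}(G)=0$, which is exactly the stated approximation by a $\mu^n$-measurable set. Uniqueness then follows at once: any measure $\nu$ on $\Bc^{(n)}$ extending $\mu^n$ with $\nu(D_{ij}^n)=\sum_{x}\mu(\{x\})^2$ satisfies $\nu(D_{ij}^{\mathrm{na}})=\nu(D_{ij}^n)-\mu^n(D_{ij}^{\mathrm{at}})=0$, whence $\nu(G)=0$; so for every $Z$ and its approximant $U$ we obtain $\nu(Z)=\nu(U)=\mu^n(U)=\mu^{(n)}(U)=\mu^{(n)}(Z)$, giving $\nu=\mu^{(n)}$.
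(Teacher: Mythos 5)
The paper does not prove this proposition; it is quoted from Keisler's work with a citation, so your argument has to stand on its own. Most of it does: the decomposition $D_{ij}^n=D_{ij}^{\mathrm{at}}\cup D_{ij}^{\mathrm{na}}$, the computation on the atomic part (including your correct observation that the stated value $\sum_x\mu(\{x\})^2$ only matches $\mu^n(D_{ij}^{\mathrm{at}})$ when $\mu(M)=1$ or $n=2$), and the final paragraph --- the $\sigma$-algebra $\mathcal N$, the approximation clause, and the derivation of uniqueness from it --- are all sound once one knows that $G=\bigcup_{i<j}D_{ij}^{\mathrm{na}}$ is $\mu^{(n)}$-null.

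The gap is exactly at the step you call the heart of the argument: the claim $(\mu^n)^*(D_{ij}^{\mathrm{na}})=0$ is false in general, so $D_{ij}^{\mathrm{na}}$ need not lie in the completion of $\Bc^n$ and the extension cannot be obtained by restricting $\overline{\mu^n}$. The covering argument breaks because ``$\mu$ has no point masses on $B$'' does not imply ``$\mu|_B$ is atomless'' in the measure-theoretic sense; a measure can have atoms that are not singletons, and then $B$ cannot be partitioned into pieces of measure at most $\mu(B)/m$. Concretely, let $M$ be uncountable, $\Bc$ the countable--cocountable $\sigma$-algebra, $\mu(E)=0$ for $E$ countable and $\mu(E)=1$ for $E$ cocountable: every singleton is measurable with measure zero, so $A=\emptyset$ and $D_{12}^{\mathrm{na}}$ is the full diagonal, yet every $\Bc^2$-measurable set containing the diagonal contains $C_0\times C_0$ for the cocountable atom $C_0$ of the relevant countably generated sub-$\sigma$-algebra, whence $(\mu^2)^*(D_{12}^{\mathrm{na}})=1$. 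What is true, and what the existence proof needs, is that the \emph{inner} measure of $D_{ij}^{\mathrm{na}}$ (and of $G$) is zero --- every $\Bc^n$-measurable subset of it is $\mu^n$-null, by a Fubini/section argument using $\mu(\{x\})=0$ for $x\in B$ --- and one must then invoke the extension lemma that adjoins a set of inner measure zero to a $\sigma$-algebra as a null set (assigning it any value between inner and outer measure is possible; here one chooses $0$), followed by a completion step to capture each individual $D_{ij}^{\mathrm{na}}$. With that replacement the remainder of your proof goes through unchanged.
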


Now we get into the logic and briefly review the framework of "integration logic",
investigated in \cite{BagheriPourmahdianThelogicofintegration}, \cite{HooverProbabilitylogic} and \cite{KeislerProbabilityquantifiers}
for studying measure and probability structures by logical means.
We use the terminology of \cite{BagheriPourmahdianThelogicofintegration}.
Using this framework enables us to formalize and express certain measure theoretic properties of spaces, functions on them, etc, in a unified way.
We first quickly review essential concepts and then formally define some notions.
By a \textit{simple (measure) relational structure} (or simply, a structure) in this paper, 
intuitively we mean a measure space we wish to study equipped with a family of \textit{relations}, where by a relation we mean a real-valued measurable function on (some power of) the measure space. Also there might be a family of elements of the ambient set of the measure space which are needed to be considered as distinguished elements.
We usually assign a symbol corresponding to each of those relations and distinguished elements and call them \textit{relation symbols} and \textit{constant symbols} respectively.
We also call the set of such symbols a (relational) \textit{language} and usually denote it by $\mathcal{L}$.
Furthermore, we call the structure to which the symbols of $\mathcal{L}$ is referring, and more generally, any other structure in which the symbols in $\mathcal{L}$ are interpreted, a \textit{$\mathcal{L}$-structure} (as formally will be defined in Definition \ref{defLstructure}).
In fact, in order to 
systematically study one or a family of structures by logical means, we usually first choose a suitable language $\mathcal{L}$ consisting of the symbols corresponding to all relations and distinguished elements we intend to deal with or investigate 
in our structure(s). So now those structure(s) can be seen as $\mathcal{L}$-structure(s).
Then, we can use symbols in $\mathcal{L}$ as well as \textit{variable symbols} (as are defined below) and \textit{logical symbols} (namely, connectives and quantifiers as explained below) to write formal logical expressions called formulas, statements and sentences, describing our $\mathcal{L}$-structure(s) logically.
This enables us to study mathematical properties of the structure(s) in hand through formal logical tools and syntactic methods.
Note that in most of the structures in this paper, real functions on the spaces play the main role.
In particular, in the case of Daniell-Stone and Riesz representation theorems, one has to deal mainly with just certain spaces of real-valued functions on ambient sets and some functions (more precisely, functionals) on those spaces.
Due to this reason, it would be sufficient for us to only work with relational structures and even more, let our languages contain, beside possibly some constant symbols, solely unary-relational symbols (i.e. real-valued functions on the ambient space itself and not a power of it), with the intention to be interpreted as the functions belonging to those spaces.
Because of this, in reviewing the setup of integration logic in this paper, we restricted ourselves to only relational structures and languages.
However, it worth to mention that in general, in integration logic (and more generally in mathematical logic), languages can contain, in addition, another type of symbol namely \textit{function symbols} with the intended interpretation as functions from the structure (or some power of it) to itself.

We always assume that a language $\mathcal{L}$ contains a distinguished binary
relation symbol $\e$ for equality.
We also assume that to each relation symbol $R$ is assigned a
nonnegative real number $\flat_R$ called its \textit{universal bound}.
In particular, $\flat_{\e}= 1$.
As will be explained more in Definition \ref{defformulasinintegrallogic}, logical symbols consist of the binary functions $+,\ \cdot$, the
unary absolute value function $| \ \ |$ and a $0$-ary function $r$ for each real number $r$. These functions are considered as connectives.
The integration symbol $\int$ is also a logical symbol and used as a quantifier.
We also use an infinite list $x,y,...$ of individual variable symbols.
We call the family of all variable symbols and constant symbols, the collection of $\mathcal{L}$-\textit{terms}.

\begin{dfn}\label{defLstructure}
{\em Let $\mathcal{L}$ be a relational language. A {\em simple (relational)
$\mathcal{L}$-structure} (or simply, a $\mathcal{L}$-structure) is a non-empty measure space $(M,\Bc,\mu)$, in
which every singleton is measurable and $\mu(M)=1$, 
equipped with:

- for each $n$, the measure $\mu^{(n)}$ given by Proposition \ref{diagonal}

- for each constant symbol $c\in \mathcal{L}$ (if there is any), an element $c^M\in M$

- for each $n$-ary relation symbol $R\in \mathcal{L}$ (if there is any), a measurable
function $R^M:M^n\rightarrow\Rn$ such that $|R^M(\a)|\leqslant \flat_R$
for any $\a\in M$.

We refer to $R^M$ and $c^M$ as the \textit{interpretations} (in $M$) of the relation and constant symbols $R$ and $c$.
}
\end{dfn}

Note that in every structure, the binary equality relation $\e(x,y)$ is interpreted as a two variable function taking value $1$ if 
$x=y$
and $0$ otherwise. 
For a language $\mathcal{L}$, the family of $\mathcal{L}$-formulas is inductively defined as follows.

\begin{dfn}\label{defformulasinintegrallogic}{\em
\begin{enumerate}
\item{If $R$ is a $n$-ary relation symbol in  $\mathcal{L}$ and $t_1,...,t_n$ are
 $\mathcal{L}$-terms, then $R(t_1,...,t_n)$ is  
a formula. 
In particular, $\e(x,y)$ is 
a formula.}

\item{For any $r\in\Rn$,\ $r$ is a formula.} 

\item{If $\phi$ and $\psi$ are formulas then $|\phi|, \phi+\psi$
and $\phi\cdot\psi$ are formulas.}

\item{If $\phi(\x,y)$ is a formula, then $\int\phi(\x,y)dy$
is a formula.} 
\end{enumerate}}
\end{dfn}

It is important to note that the expressions $\phi\vee\psi$ and $\phi\wedge\psi$ (the max and min of two formulas $\phi$ and $\psi$) are also formulas since they can be built using $+$, $-$ and $| \ |$.
In fact we have
$\phi\vee \psi=\frac{\phi+\psi+|\phi-\psi|}{2}$
and
$\phi \wedge \psi=\frac{\phi+\psi-|\phi-\psi|}{2}$.
\textit{Free variables} of formulas are easily defined (by induction) as the variables which are not bounded by the quantifies $\int$.
For example in the formula $\int (x+y) \ dy+ |2z|$, the variables $x$ and $z$ are free while $y$ is bounded by the quantifier $\int$.
One writes $\phi(x_1,...,x_n)$ to indicate that all free variables
of the formula $\phi$ appear in $x_1,...,x_n$. 
A \textit{closed} formula is a formula
without free variables. If $\phi(\x)$ is a formula and
$\a\in M^{|\x|}$, the value of $\phi(\a)$ in $M$, denoted by
$\phi^M(\a)$, is defined inductively in the natural way.
For example $$(\phi+\psi)^M(\a)=\phi^M(\a)+\psi^M(\a), \ \ \ \ \ \
\Big(\int\phi(\x,y) dy\Big)^M(\a)=\int_M \phi^M(\a,y)dy.$$
So $\phi(\x)$ gives rise to a real-valued function on $M^{|x|}$, which is called the \textit{interpretation} of the formula $\phi$ and is denoted by $\phi^M$.
Note that, in particular, if $\phi$ is a closed formula, then for any model $M$, $\phi^M$ is uniquely determined and is a real number.
For example if $\phi=\int\psi(y)dy$ where $\psi(y)$ is a formula, then $\phi^M=\int_M \psi^M(y)dy$.
A \emph{statement} is an expression of the form $\phi(x) \geqslant r$ or $\phi(x)= r$ for some formula $\phi(x)$ and some $r \in \mathbb{R}$.
If $\phi$ is a closed formula, then the statement
is called a \emph{closed statement} (or \textit{sentence}).
Any set of closed statements is called a \emph{theory}.
Obviously expressions such as $\phi(x) \leqslant r$, $\phi(x) \geqslant \psi(x)+r$ or $\phi(x)= \psi(x)+r$, where $\phi(x)$ and $\psi(x)$ are formulas, are also statements since they can be written in the form $-\phi(x) \geqslant -r$, $\phi(x)-\psi(x) \geqslant r$ or $\phi(x)-\psi(x)=r$ while $-\phi(x)$ and $\phi(x)-\psi(x)$ are again formulas.
A closed statement $\phi=r$ or $\phi \geqslant r$ is \textit{satisfied} in a simple $\mathcal{L}$-structure $M$, denoted by $M\vDash "\phi=r"$ and $M\vDash "\phi \geqslant r"$, if $\phi^M=r$ and $\phi^M \geqslant r$ respectively.
A simple $\mathcal{L}$-structure $M$ is a 
\textit{model} of a theory $T$, denoted $M\vDash T$, if each of its statements is satisfied in $M$.
A theory is satisfiable if it has a model.
A theory is finitely satisfiable if every finite subset of it has a model.
The theory of a structure $M$ is the collection of statements satisfied in it.
Such theories are called complete.

As some examples of basic measure theoretic properties expressible in integration logic, one can mention that the expression "singletons have measure zero" is stated by
$\int\e(x,x)=0$. Also the expression "the space has total measure $r$" is written by $\int 1\ dx=r$. 
To see more examples 
the reader can see Remark \ref{somesampleexpresibleproperties}.

\vspace{1mm}

Since now on, we work with 
The main logical tool used in this paper is the following theorem which is basically Theorem 4.7 of \cite{BagheriPourmahdianThelogicofintegration}.

\begin{thm}\label{logicalcompactnesstheorem} \emph{(Logical compactness theorem)}
Any finitely satisfiable theory is satisfiable.
\end{thm}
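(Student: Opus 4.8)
The plan is to prove compactness by the ultraproduct method: out of models of the finite pieces of $T$ I would build a single probability structure as an ultraproduct, and then verify a \L o\'s-type theorem for it. First I would let $I$ be the collection of finite subsets of $T$, directed by inclusion, and for each $T'\in I$ fix a model $M_{T'}\vDash T'$, which exists by finite satisfiability. For each $T'\in I$ put $\widehat{T'}=\{T''\in I: T''\supseteq T'\}$; these upward cones have the finite intersection property, so they extend to an ultrafilter $\mathcal{U}$ on $I$. The point of this choice is that any fixed statement $\sigma\in T$ lies in $T''$, hence is satisfied in $M_{T''}$, for every $T''$ in the cone $\widehat{\{\sigma\}}\in\mathcal{U}$; so $\sigma$ holds ``$\mathcal{U}$-almost everywhere'' along the index set.

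Next I would assemble the ultraproduct structure $M=\prod_{T'}M_{T'}/\mathcal{U}$. Its underlying set is the usual set-theoretic ultraproduct; a constant $c$ is interpreted by the class of $(c^{M_{T'}})_{T'}$; and an $n$-ary relation symbol $R$ is interpreted by $R^M([a])=\lim_{\mathcal{U}} R^{M_{T'}}(a_{T'})$ for a tuple $[a]$ with representatives $a_{T'}$, a limit that exists precisely because the universal bound $|R^{M_{T'}}|\leqslant\flat_R$ keeps the net inside a compact interval. The delicate ingredient is the measure. On the algebra of internal sets $[A]=\prod_{T'}A_{T'}/\mathcal{U}$ I would define the finitely additive set function $\nu_0([A])=\lim_{\mathcal{U}}\mu_{T'}(A_{T'})$, and then invoke the Loeb-measure construction, i.e. the Carath\'eodory extension of Theorem \ref{caratheodoryextensiontheorem} applied to $\nu_0$, to obtain a genuine countably additive probability measure $\mu^M$ on a $\sigma$-algebra $\Bc^M$ containing all internal sets, with $\mu^M(M)=\lim_{\mathcal{U}}\mu_{T'}(M_{T'})=1$ and every singleton measurable. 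The diagonal measures $\mu^{(n)}$ demanded by Definition \ref{defLstructure} are then supplied by Proposition \ref{diagonal}, so that $M$ is a legitimate simple $\mathcal{L}$-structure.

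The core is a \L o\'s-type theorem proved by induction on the structure of formulas: for every formula $\phi(\x)$ and every tuple $[a]$ of elements of $M$ one has $\phi^M([a])=\lim_{\mathcal{U}}\phi^{M_{T'}}(a_{T'})$. The atomic case is just the definition of $R^M$, and the connective cases $|\phi|$, $\phi+\psi$, $\phi\cdot\psi$ follow immediately, since each is continuous in its arguments and ultralimits respect continuous operations on bounded nets. Granting the inductive claim, the conclusion is immediate: a statement ``$\phi\geqslant r$'' in $T$ holds in $M_{T'}$ for all $T'\in\widehat{\{\phi\geqslant r\}}\in\mathcal{U}$, so $\phi^M=\lim_{\mathcal{U}}\phi^{M_{T'}}\geqslant r$ and hence $M\vDash$ ``$\phi\geqslant r$''; the same works verbatim for statements of the form ``$\phi=r$''. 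Therefore $M\vDash T$ and $T$ is satisfiable, which is exactly the assertion of the theorem.

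The hard part will be the quantifier step of the induction, namely showing $\int_M\phi^M([a],y)\,d\mu^M(y)=\lim_{\mathcal{U}}\int_{M_{T'}}\phi^{M_{T'}}(a_{T'},y)\,d\mu_{T'}(y)$. This is precisely the compatibility of the Loeb measure with the ultralimit, and it requires first checking that the function $y\mapsto\phi^M([a],y)$ is $\mu^M$-measurable, and then that its Loeb integral agrees with the ultralimit of the internal integrals. I expect this to rest on an approximation argument through simple functions built from internal sets, together with the observation that the internal integral $\int_{M_{T'}}\phi^{M_{T'}}$ is itself the interpretation of a formula, so that the universal bounds keep every net in a compact range and let the finitely additive identity for $\nu_0$ propagate through the Carath\'eodory extension. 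Everything else in the argument is routine; this measurability-and-integration compatibility is the genuine obstacle.
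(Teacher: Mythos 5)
The paper does not actually prove this theorem: it imports it as Theorem 4.7 of the cited work of Bagheri and Pourmahdian (building on Keisler and Hoover), so there is no in-paper proof to compare against. Your strategy --- an ultrafilter on the finite subsets of $T$ generated by the upward cones, an ultraproduct of the witnessing models, ultralimit interpretations of the bounded relations, a Loeb-type measure obtained by Carath\'eodory extension of $\nu_0([A])=\lim_{\mathcal U}\mu_{T'}(A_{T'})$, and a \L o\'s theorem by induction on formulas --- is precisely the strategy of those sources, so in spirit you are reconstructing the intended proof rather than finding a new one.

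As a proof, however, what you have written is incomplete, and the part you defer is not a technicality but the entire content of the theorem. Three concrete gaps. First, Carath\'eodory extension (Theorem \ref{caratheodoryextensiontheorem}) applies to a \emph{premeasure}, i.e.\ $\nu_0$ must be countably additive on the algebra of internal sets; this is not automatic from finite additivity and is exactly where one needs the ultrafilter to be countably incomplete (or a saturation argument showing that a countable disjoint internal cover of an internal set is essentially finite). You should check that your $\mathcal U$, chosen only to contain the cones, can be taken countably incomplete (trivial if $T$ is infinite; if $T$ is finite the theorem is vacuous). Second, the quantifier step you flag as ``the genuine obstacle'' is indeed the theorem: one must show that $y\mapsto\lim_{\mathcal U}\phi^{M_{T'}}(a_{T'},y_{T'})$ is Loeb-measurable and that its integral equals the ultralimit of the internal integrals. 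The bounded case of Loeb integration theory does give this for a single integration, but for nested quantifiers the induction forces you to control measurability of the resulting function of the remaining variables with respect to the measures $\mu^{(n)}$ on powers, i.e.\ a Fubini-type compatibility between the Loeb measure of the ultraproduct and the product-plus-diagonal measures of Proposition \ref{diagonal}. This Fubini problem is the known hard point of probability logic and is the reason Keisler introduced graded structures; the paper itself alludes to this when it remarks that compactness is proved for graded structures and then restricted to simple ones. Your sketch does not engage with it. Third, you must verify that the ultraproduct is a legitimate \emph{simple} structure in the sense of Definition \ref{defLstructure} (singletons measurable, $\e$ interpreted as actual equality after the ultralimit), which again interacts with the diagonal measures. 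In short: right road map, correctly located obstacle, but the obstacle is where the proof lives, and until the measurability-and-Fubini step is carried out you have a plan, not a proof.
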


Now we want to mention some technical points about the above theorem. However, this paragraph is independent of the rest of the paper and the one who is not interested in logical details, can skip that.
It worth to be noted that in papers \cite{BagheriPourmahdianThelogicofintegration} and \cite{KeislerProbabilityquantifiers}, in addition of the notion of simple $\mathcal L$-structure, a more general notion of structure, namely graded structures, was defined and compactness theorem was proved for such more general structures too. Simple $\mathcal L$-structures which are the concern of this paper are special instances of graded ones. 
In fact, Theorem \ref{logicalcompactnesstheorem} is the compactness theorem restricted to the class of simple structures (with just unary relations with exception of equality), as it is the concern of this paper. This is a very applicable variant of compactness theorem with many applications in different situations.

We recall a lemma which helps us to simplify the arguments.
We say that a theory $T$ is \emph{finitely approximately satisfiable} if every finite list of closed statements of the form $\phi=r$ and $\phi \geqslant r$ in $T$ is \emph{approximately satisfiable} which means that for every $\epsilon>0$, there exists a model $M$ which $\epsilon$-approximately (with error at most $\epsilon$) satisfies that finite list of statements, or more precisely, $|\phi^M-r|\leqslant\epsilon$ and $\phi^M \geqslant r-\epsilon$ respectively for each of such statements.
Using a standard technique by a non-principal ultrafilter on $\mathbb N$, one can easily show that:

\begin{lem}\label{approximate}
If $T$ is finitely approximately satisfiable then it is finitely satisfiable.
\end{lem}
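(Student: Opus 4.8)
The plan is to pass from approximate models to a single exact model via an ultraproduct along a non-principal ultrafilter on $\Nn$, which is exactly the construction underlying the proof of Theorem \ref{logicalcompactnesstheorem} in \cite{BagheriPourmahdianThelogicofintegration}. Fix an arbitrary finite subset $T' \subseteq T$; the goal is to produce one simple $\mathcal{L}$-structure satisfying every statement of $T'$ exactly. Since $T$ is finitely approximately satisfiable, for each integer $n \geq 1$ we may apply the definition with $\epsilon = 1/n$ to the finite list $T'$ and obtain a model $M_n$ that $(1/n)$-approximately satisfies $T'$; that is, $|\phi^{M_n} - r| \leq 1/n$ for each statement ``$\phi = r$'' in $T'$, and $\phi^{M_n} \geq r - 1/n$ for each statement ``$\phi \geq r$'' in $T'$.

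Next I would fix a non-principal ultrafilter $\mathcal{U}$ on $\Nn$ and form the integration-logic ultraproduct $M = \prod_{\mathcal{U}} M_n$ of the family $(M_n)_{n \geq 1}$, as constructed in the cited references. The crucial input is the \L{}o\'{s}-type theorem for this logic, asserting that for every closed formula $\phi$ one has $\phi^M = \lim_{\mathcal{U}} \phi^{M_n}$, the ultralimit of the real numbers $\phi^{M_n}$. For this ultralimit to exist it suffices that the sequence $(\phi^{M_n})_n$ be bounded, and this holds uniformly: every formula has an interpretation bounded by a constant depending only on its syntactic structure and the universal bounds $\flat_R$ of its relation symbols, and not on the structure. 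Indeed, this bound propagates by induction through the connectives $+$, $\cdot$, $|\ \ |$ and the constant formulas $r$, while for the integration quantifier one uses $\mu(M_n) = 1$ to get $\big|\int_{M_n} \phi^{M_n}(\a,y)\,dy\big| \leq \sup|\phi^{M_n}|$, so the bound is preserved.

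With the \L{}o\'{s} theorem in hand the verification is immediate and elementary. For a statement ``$\phi = r$'' in $T'$, the estimate $|\phi^{M_n} - r| \leq 1/n \to 0$ forces $\phi^M = \lim_{\mathcal{U}} \phi^{M_n} = r$, so $M \vDash ``\phi = r"$. For a statement ``$\phi \geq r$'', the estimate $\phi^{M_n} \geq r - 1/n$ survives the ultralimit (since for every $\eta > 0$ the cofinite set $\{n : \phi^{M_n} > r - \eta\}$ lies in $\mathcal{U}$), giving $\phi^M = \lim_{\mathcal{U}} \phi^{M_n} \geq r$, so $M \vDash ``\phi \geq r"$. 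Hence $M$ is an exact model of $T'$, and since $T'$ was an arbitrary finite subset of $T$, the theory $T$ is finitely satisfiable.

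The genuinely nontrivial ingredient is the \L{}o\'{s}-type theorem together with the fact that $M = \prod_{\mathcal{U}} M_n$ is again a bona fide simple $\mathcal{L}$-structure; in particular that the ultraproduct carries a probability measure with measurable singletons and that integration over it commutes with the ultralimit. I expect this to be the main obstacle, but it is precisely the measure-theoretic ultraproduct machinery already developed in \cite{BagheriPourmahdianThelogicofintegration} and \cite{KeislerProbabilityquantifiers} while establishing Theorem \ref{logicalcompactnesstheorem}; once it is invoked, everything else reduces to the elementary ultralimit bookkeeping above.
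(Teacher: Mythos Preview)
Your proposal is correct and follows exactly the approach the paper indicates: the paper itself does not give a detailed proof but simply remarks that the lemma follows by ``a standard technique by a non-principal ultrafilter on $\Nn$,'' which is precisely the ultraproduct-plus-\L o\'s argument you have spelled out. Your additional care about uniform boundedness of $\phi^{M_n}$ and the ultralimit computations fills in the details the paper leaves implicit.
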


Section \ref{applications} is where we use compactness theorem
in order to prove the well-known classical measure existence theorems in measure theory we mentioned before.

\subsection{Some preliminary lemmas}\label{SectionpreliminaryLemmas}
In this subsection, we state and prove a few statements which will be used in the proof of the main results in the next section.
We denote the characteristic function of a set $U$ by $\chi(U)$.
By a one-side (both-sides) unbounded interval in $\mathbb{R}$ we mean an interval which is unbounded from one of right or left sides (both sides).
Through this subsection, we assume that $\mathcal{A}$ ia a vector lattice of real-valued functions on a set $X$
containing the function $1_X$ (the function with value $1$ on every $x \in X$). Also we assume that $I$ is a positive linear real-valued function on $\mathcal{A}$.

\begin{lem}\label{tendtochar}
	Let $f_1,\ldots,f_m$ be a not necessarily distinct finite family of functions in $\mathcal{A}$ and $U_1,\ldots,U_m$ a not necessarily distinct family of open 
	intervals in $\mathbb{R}$, either bounded, one side unbounded or both sides unbounded.
	Then, there is some increasing sequence of $[0,1]$-valued functions in $\mathcal{A}$ tending pointwise to
	$\chi(\bigcap_{i=1}^m f_i^{-1}(U_i))$ while the supports of functions in the sequence are subsets 
	of $\bigcap_{i=1}^m f_i^{-1}(U_i)$, and similarly, 
	there is some increasing sequence of $[0,1]$-valued functions in $\mathcal{A}$ tending pointwise to
	$\chi(\bigcup_{i=1}^m f_i^{-1}(U_i))$ while the support of its functions are subsets 
	of $\bigcup_{i=1}^m f_i^{-1}(U_i)$.
	Moreover, the statement holds when we replace the words "open intervals" to "closed intervals", "increasing" to "decreasing" and "subset" to "superset".
	In this case, in particular, $U_i$'s can be single real numbers since every real number can be seen as a closed interval.
\end{lem}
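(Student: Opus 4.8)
The plan is to first reduce the whole statement to the case of a single function and a single interval, and then to assemble the general case using the lattice operations $\vee$ and $\wedge$. The central observation is that for any $f\in\mathcal{A}$ and any continuous piecewise-linear map $\phi\colon\mathbb{R}\to\mathbb{R}$ with finitely many breakpoints, the composition $\phi\circ f$ lies in $\mathcal{A}$: it can be written explicitly as a finite lattice expression in $f$ and the constant functions $c\,1_X$, which belong to $\mathcal{A}$ because $1_X\in\mathcal{A}$ and $\mathcal{A}$ is a vector lattice. Hence it suffices to exhibit suitable piecewise-linear ``cutoff'' profiles on $\mathbb{R}$ and post-compose them with the $f_i$, reading off membership in $\mathcal{A}$ directly from the explicit expression rather than invoking closure under general composition.

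For the open, increasing, subset case with a single bounded interval $U=(a,b)$, I would take
$$g_n = 0\vee\bigl(1_X\wedge n\,(f-a\,1_X)\wedge n\,(b\,1_X-f)\bigr),$$
which is the post-composition of $f$ with $t\mapsto\max(0,\min(1,n(t-a),n(b-t)))$. Each $g_n$ is $[0,1]$-valued and lies in $\mathcal{A}$; for $x$ with $f(x)\in(a,b)$ the value $g_n(x)$ increases to $1$, while for $f(x)\notin(a,b)$ one has $g_n(x)=0$. Thus the support of $g_n$ is contained in $f^{-1}(U)$ and the sequence increases pointwise to $\chi(f^{-1}(U))$. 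The one-side unbounded intervals $(a,\infty)$ and $(-\infty,b)$ are handled by dropping the corresponding clamp, e.g. $0\vee(1_X\wedge n(f-a\,1_X))$, and $(-\infty,\infty)$ by the constant $1_X$.

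To pass from a single function to the finite family, I would set, for the intersection,
$$\psi_n = \bigwedge_{i=1}^m g_n^{(i)},\qquad\text{and for the union}\qquad \psi_n = \bigvee_{i=1}^m g_n^{(i)},$$
where $g_n^{(i)}$ is the sequence above for $f_i$ and $U_i$. Since $\mathcal{A}$ is a lattice, each $\psi_n\in\mathcal{A}$, stays $[0,1]$-valued, and remains increasing in $n$ (finite $\wedge$ and $\vee$ preserve monotonicity). For the intersection, $\psi_n(x)>0$ forces every factor positive, hence $x\in\bigcap_i f_i^{-1}(U_i)$, giving the support inclusion; and because the family is finite, $\psi_n(x)=\min_i g_n^{(i)}(x)\to\min_i\chi(f_i^{-1}(U_i))(x)=\chi\bigl(\bigcap_i f_i^{-1}(U_i)\bigr)(x)$. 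The union case is symmetric, with $\max$ replacing $\min$.

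Finally, for the closed, decreasing, superset variant I would replace the cutoffs by ``plateau'' profiles equal to $1$ on the interval and tapering down just outside it; for a bounded closed interval $[a,b]$ this is the post-composition with $t\mapsto\max(0,\min(1,1+n(t-a),1+n(b-t)))$, i.e.
$$h_n = 0\vee\bigl(1_X\wedge(1_X+n(f-a\,1_X))\wedge(1_X+n(b\,1_X-f))\bigr),$$
which equals $1$ on $f^{-1}([a,b])$, decreases pointwise to $\chi(f^{-1}([a,b]))$, and has support containing $f^{-1}([a,b])$; single points arise as the degenerate interval $[a,a]$, and the unbounded closed intervals are again obtained by dropping a clamp. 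Combining via $\wedge$ (intersection) and $\vee$ (union) as before finishes this case. I expect the main technical care to lie in choosing the piecewise-linear profiles so that all four requirements — values in $[0,1]$, monotonicity in $n$, the correct sub- or super-support inclusion, and pointwise convergence to the right characteristic function — hold simultaneously, and in verifying that these properties survive the finite $\wedge/\vee$ combination; the membership $\psi_n,h_n\in\mathcal{A}$ is then immediate from the vector-lattice structure.
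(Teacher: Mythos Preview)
Your proposal is correct and follows essentially the same approach as the paper: both construct piecewise-linear ``cutoff'' approximants using the lattice operations on $f$ and constants, and then combine the single-function sequences via $\wedge$ for intersections and $\vee$ for unions. The only cosmetic difference is that the paper first handles half-lines $(\alpha,\infty)$, $(-\infty,\alpha)$ etc.\ with formulas like $n\bigl(\min(f,\alpha+\tfrac{1}{n})-\min(f,\alpha)\bigr)$ and then obtains bounded intervals by taking $\wedge$ of two such sequences, whereas you write the bounded-interval cutoff in one shot; the resulting functions are the same.
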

\begin{proof}
We first start to prove the lemma for just one $f$ and one $U$.
Fix $f \in \mathcal{A}$ and $\alpha\in\mathbb{R}$. For every $n \in \mathbb{N}$ set
$$g^1_n(x):=n\big(\min(f(x),\alpha+\frac{1}{n})-\min(f(x),\alpha)\big) \ \ \ , \ \ \ g^2_n(x):=n\big(\max(f(x),\alpha)-\max(f(x),\alpha-\frac{1}{n})\big)$$
$$h^1_n(x):=n\big(\min(f(x),\alpha)-\min(f(x),\alpha-\frac{1}{n})\big) \ \ \ , \ \ \ h^2_n(x):=n\big(\max(f(x),\alpha+\frac{1}{n})-\max(f(x),\alpha)\big).$$
Then, it is not very difficult to see that the sequences of functions $(g^1_n)_{n<\omega}$ and $(g^2_n)_{n<\omega}$ increase pointwise to $\chi(\{x:\alpha < f(x)\})$ and $\chi(\{x: f(x) < \alpha\})$ respectively.
Also support of each $g^1_n$ and each $g^2_n$ is a subset of $\{x:\alpha < f(x)\}$ and $\{x: f(x) < \alpha\}$ respectively.
Similarly, $(h^1_n)_{n<\omega}$ and $(h^2_n)_{n<\omega}$ decrease pointwise to
$\chi(\{x:\alpha \leqslant f(x)\})$ and $\chi(\{x: f(x) \leqslant \alpha\})$ respectively while support of each $h^1_n$ and each $h^2_n$ is a superset of $\{x:\alpha \leqslant f(x)\}$ and $\{x: f(x) \leqslant \alpha\}$ respectively.
Also $g^1_n$'s, $g^2_n$'s, $h^1_n$'s and $h^2_n$'s are functions in $\mathcal{A}$ taking values in $[0,1]$.
So the statement is proved for one $f$ and one $U$ of the form $(-\infty,\alpha), (\alpha,\infty), (-\infty,\alpha],$ and $[\alpha,\infty)$.

For $\alpha<\beta$, if $(g_n)_{n<\omega}$ and $(h_n)_{n<\omega}$ are sequences of functions obtained above increasing to $\chi(\{x: f(x)<\beta\})$ and $\chi(\{x: \alpha< f(x)\})$ respectively,
then $(g_n\wedge h_n)_{n<\omega}$ increases to
$\chi(\{x: \alpha< f(x)<\beta\})$ and the support of each $g_n\wedge h_n$ is a subset of $\{x: \alpha< f(x)<\beta\}$.
Similarly, if $(g_n)_{n<\omega}$ and $(h_n)_{n<\omega}$ are sequences of functions obtained above decreasing to
$\chi(\{x: f(x)\leqslant\beta\})$ and $\chi(\{x: \alpha\leqslant f(x)\})$ respectively, then $(g_n\wedge h_n)_{n<\omega}$ decreases to
$\chi(\{x: \alpha\leqslant f(x)\leqslant\beta\})$ and the support of each $g_n\wedge h_n$ is a superset of $\{x: \alpha\leqslant f(x)\leqslant\beta\}$.
These prove the statement of lemma for one $f$ and one $U$ of the form $(\alpha,\beta)$ and $[\alpha,\beta]$.
It worth mentioning that if $(g_n)_{n<\omega}$ is a sequence which increases to $\chi(\{x: f(x)<\beta\})$, then
$(1-g_n)_{n<\omega}$ decreases to $\chi(\{x:\beta\leqslant f(x)\})$ and vice versa.

Let $U_1,\ldots,U_m$ be open intervals of the above forms and $f_1,\ldots,f_m \in \mathcal{A}$.
Assume that for each $i=1,\ldots,m$, the sequence $(g^i_n)_{n<\omega}$ is the sequence of functions increasing to $\chi(\{x:f(x) \in U_i\})$ obtained in the way that explained above.
Then the sequences of $[0,1]$-valued functions $(g^1_n \wedge \ldots \wedge g^m_n)_{n<\omega}$ and $(g^1_n \vee \ldots \vee g^m_n)_{n<\omega}$ increase to 
$\chi(\bigcap_{i=1}^m f_i^{-1}(U_i))$ and $\chi(\bigcup_{i=1}^m f_i^{-1}(U_i))$ respectively.
Moreover, since the support of each $g^i_n$ is a subset of $\{x:f(x) \in U_i\}$, then the supports of $(g^1_n \wedge \ldots \wedge g^m_n)_{n<\omega}$ and $(g^1_n \vee \ldots \vee g^m_n)_{n<\omega}$ are subsets of $\bigcap_{i=1}^m f_i^{-1}(U_i)$ and $\bigcup_{i=1}^m f_i^{-1}(U_i)$ respectively.
Similarly, the same statements hold when we replace being open by being closed for $U_i$'s, increase to decrease and subset by superset.
In particular, in this case one sees that the proof works when some of $U_i$'s are single real numbers since every real number can be seen as a closed interval.
\hfill $\square$
\end{proof}

\begin{rem}\label{DecSeqCores}
{\emph{Let $f \in \mathcal{A}$ and $\alpha\in \mathbb{R}$.
Also let $(h_n)_{n <\omega}$ be the pointwise decreasing sequences of functions obtained in the proof of Lemma \ref{tendtochar} tending to $\chi(f^{-1}(\{\alpha\}))$.
Then, we call $(h_n)_{n <\omega}$ the \emph{"decreasing sequence corresponding to $\alpha$ for $f$"}.
We remind from Lemma \ref{tendtochar} that $h_n$'s take values in $[0,1]$.
Therefore, since $I$ is positive linear, for every $n$, $0 \leqslant I(h_n)\leqslant I(1_X)$.
So, $(I(h_n))_{n <\omega}$ is a sequence of non-negative real numbers
and since $(h_n)_{n <\omega}$ is decreasing, again by positive linearity of $I$,
the sequence $(I(h_n))_{n <\omega}$ is decreasing (but not necessarily strictly decreasing). 
Hence, $\lim_{n \rightarrow \infty} I(h_n)$ exists and is a non-negative real number.
We call $\alpha$ an \emph{"inessential value of $f$ with respect to $I$"} if $\lim_{n \rightarrow \infty} I(h_n)=0$.
It is not hard to see that for every $n$ and every $x$ outside of $f^{-1}(\alpha-\frac{1}{n},\alpha+\frac{1}{n})$, we have $h_n(x)=0$.
So, the support of each $h_n$ is a subset of $f^{-1}(\alpha-\frac{1}{n},\alpha+\frac{1}{n})$.
}}
\end{rem}

\begin{lem}\label{nullpoints}
Let $f_1,\ldots,f_t\in\mathcal{A}$ and $(r,s)$ be an interval in $\mathbb{R}$. Also assume that $I(1_X) \not=0$.
Then, there exists a $\alpha\in(r,s)$ which is an inessential value of every $f_i$ with respect to $I$.
\end{lem}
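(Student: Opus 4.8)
The plan is to reduce the statement to a single function and show that, for each fixed $f \in \mathcal{A}$, only countably many reals can fail to be inessential values of $f$; the uncountability of $(r,s)$ then furnishes a value common to all $f_1,\ldots,f_t$. For fixed $f$ and $\alpha \in \mathbb{R}$, let $(h_n^{f,\alpha})_{n<\omega}$ be the decreasing sequence corresponding to $\alpha$ for $f$ as in Remark \ref{DecSeqCores}, and set $c_f(\alpha) := \lim_{n\to\infty} I(h_n^{f,\alpha})$, which by that remark exists and is a non-negative real number. Thus $\alpha$ is an inessential value of $f$ exactly when $c_f(\alpha)=0$, and I shall call $\alpha$ \emph{essential} for $f$ otherwise. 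The goal is to prove that the set $E_f$ of essential values of $f$ is countable.

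The key estimate I would establish is that for any finite set of \emph{distinct} reals $\alpha_1,\ldots,\alpha_k$ one has $\sum_{j=1}^k c_f(\alpha_j) \leqslant I(1_X)$. To see this, put $\delta = \min_{i\neq j}|\alpha_i-\alpha_j| > 0$ and fix $n$ large enough that $\frac{2}{n} < \delta$. By Remark \ref{DecSeqCores} the support of $h_n^{f,\alpha_j}$ lies in $f^{-1}(\alpha_j-\frac{1}{n},\alpha_j+\frac{1}{n})$, and for such $n$ these preimages are pairwise disjoint; hence at each point of $X$ at most one of the $[0,1]$-valued functions $h_n^{f,\alpha_1},\ldots,h_n^{f,\alpha_k}$ is nonzero, so $\sum_{j=1}^k h_n^{f,\alpha_j} \leqslant 1_X$ pointwise. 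Applying positivity and linearity of $I$ gives $\sum_{j=1}^k I(h_n^{f,\alpha_j}) \leqslant I(1_X)$, and letting $n\to\infty$ (a finite sum, so the limit passes inside) yields the claimed bound.

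From this bound countability is immediate. For each $m \in \mathbb{N}$ the set $\{\alpha : c_f(\alpha) > I(1_X)/m\}$ has fewer than $m$ elements, since $m$ distinct such values would contribute more than $I(1_X)$ to the sum; as $I(1_X)\neq 0$ these thresholds are genuinely positive, and $E_f$ is the union over $m$ of these finite sets, hence countable. (If one wished to drop the hypothesis $I(1_X)=0$ were allowed, every $c_f(\alpha)$ would vanish and $E_f$ would be empty, so the conclusion holds trivially.) Taking the union $\bigcup_{i=1}^t E_{f_i}$ over the finitely many given functions still yields a countable set, while $(r,s)$ is uncountable; therefore some $\alpha \in (r,s)$ avoids every $E_{f_i}$, and this $\alpha$ is an inessential value of each $f_i$, as required.

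The only delicate point is the disjoint-supports argument together with the interchange of the limit and the finite sum; both become routine once one records that the approximating supports shrink into $f^{-1}(\alpha-\frac{1}{n},\alpha+\frac{1}{n})$ and that $I$ is order-preserving on $[0,1]$-valued functions. Everything after the mass bound $\sum_j c_f(\alpha_j)\leqslant I(1_X)$ is a pure counting argument, so I expect the construction of that bound to be the crux of the proof.
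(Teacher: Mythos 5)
Your proof is correct, and its engine is the same one the paper uses: for distinct values $\alpha_1,\ldots,\alpha_k$ the approximants $h_n^{f,\alpha_j}$ eventually have pairwise disjoint supports (once $2/n<\min_{i\neq j}|\alpha_i-\alpha_j|$), so positivity and linearity of $I$ bound $\sum_j c_f(\alpha_j)$ by $I(1_X)$. The difference is in the packaging. The paper argues by contradiction and only for $t=1$: assuming every point of $(r,s)$ is essential, it finds an $m$ and $2m$ points each of mass greater than $I(1_X)/m$, builds $g=\sum_i g_i$ with disjointly supported $g_i$, and derives $2I(1_X)\leqslant I(g)\leqslant I(1_X)$; it then asserts that the case of several functions ``is similar but needs some more effort.'' You instead prove the cleaner intermediate statement that the set $E_f$ of essential values of a single $f$ is countable, after which the general case is a one-line countable-union-versus-uncountable-interval argument and no contradiction is needed. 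This buys you a complete proof of the lemma as stated (the paper leaves $t>1$ to the reader) and isolates a reusable fact, the countability of the essential set, from the cardinality bookkeeping. The two delicate points you flag --- disjointness of the shrinking supports and passing the limit through a finite sum --- are exactly the ones the paper's argument also rests on, and both are handled correctly.
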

\begin{proof}
We prove the claim for $t=1$ namely for one $f$.
The general case is similar but needs some more effort. 
Assume for contradiction that there is no such $\alpha$ for $f$.
For each $u\in(r,s)$, let $(h^u_n)_{n < \omega}$ be the decreasing sequence corresponding
to $u$ for $f$ (defined in Remark \ref{DecSeqCores}) and let $S(u):=\lim_{n \rightarrow \infty} I(h^u_n)$.
So, for every $u\in(r,s)$, since $u$ is not an inessential value of $f$ with respect to $I$, we have $S(u)>0$ .
Thus, for some $m\in\mathbb{N}$, there exists an infinite subset $V$ of $(r,s)$
such that $S(u)>\frac{I(1_X)}{m}$ for each $u\in V$. Let $v_1,\ldots, v_{2m}\in V$ be distinct.
So in particular $S(v_i)>\frac{I(1_X)}{m}$ for each $i=1,\ldots,2m$.
By using Remark \ref{DecSeqCores}, for each $i=1,\ldots,2m$ and $n$, support of each $h^{v_i}_n$ is a subset of $f^{-1}(v_i-\frac{1}{n},v_i+\frac{1}{n})$.
Thus, for each $i=1,\ldots,2m$, we can choose a function $g_i$ from the sequence of functions $(h^{v_i}_n)_{n < \omega}$ in such a way that at the end of the selections, the supports of selected $g_i$'s are mutually disjoint.
Since $h^{v_i}_n$'s take values in $[0,1]$ (by Remark \ref{DecSeqCores}), for every $i$ we have $0 \leqslant g_i(x) \leqslant 1$ (for every $x$).
Now the function $g:=\sum_{i=1}^{2m}g_i$ belongs to $\mathcal{A}$ and $0 \leqslant g(x) \leqslant 1$ for every $x$.
We remind that $I$ is positive linear. Therefore, $0 \leqslant I(g) \leqslant I(1_X)$. On the other hand, for each $i$, since $(h^{v_i}_n)_{n < \omega}$ is a decreasing sequence, by positive linearity of $I$, the sequence $(I(h^{v_i}_n))_{n < \omega}$ is decreasing which follows that $I(g_i) \geqslant \lim_{n \rightarrow \infty} I(h^{v_i}_n)=S(v_i)$. So we have
$I(g)=\sum_{i=1}^{2m}I(g_i) \geqslant \sum_{i=1}^{2m} S(v_i) \geqslant 2m.\frac{I(1_X)}{m}=2I(1_X)$.
Combining above facts, we have $I(1_X)=0$ which is a contradiction.
\hfill $\square$
\end{proof}

\vspace{1.5mm}

We say that a family of sets covers a set $X$ (or is a covering of $X$) if their union contains $X$ as a subset.

\begin{lem}\label{changecover}
	Let $(Y,\Bc,\mu)$ be a measure space of finite measure, $\mathcal{K}$
	a $\mathbb{R}$-vector lattice of real measurable functions on $Y$ and
	$\mathcal{C}$ the Boolean algebra generated by sets $f^{-1}(0,\infty)$
	where $f\in\mathcal{K}$.
	Let $X \subseteq Y$ and $\{U_n\}_{n<\omega} \subseteq \Cc$ be a covering of $X$.
	Then, for each $\epsilon>0$, there is a covering $\{V_n\}_{n<\omega}$ of $X$ such
	that (i): for each $n$, $V_n=f_n^{-1}(0,\infty)$ for some $f_n\in \mathcal{K}$, (ii): sum
	of the measures of members of this covering does not differ from the sum of the measures of members of the 
	first covering $\{U_n\}_{n<\omega}$ with more than $\epsilon$, and (iii): 
	$\mu(f_n^{-1}(\{0\}))=0$ for each $f_n$ mentioned above.
\end{lem}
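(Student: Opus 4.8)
The plan is to build the new covering by enlarging each $U_n$ to a positivity set whose extra measure is tiny and whose zero set is $\mu$-null, distributing the total error geometrically. First I would isolate the local statement that carries the whole argument: given $U\in\Cc$ and $\delta>0$, there is $f\in\mathcal K$ with $U\subseteq f^{-1}(0,\infty)$, with $\mu\big(f^{-1}(0,\infty)\setminus U\big)<\delta$, and with $\mu\big(f^{-1}(\{0\})\big)=0$. Granting this, I apply it to each $U_n$ with $\delta=\epsilon 2^{-(n+1)}$ and set $V_n=f_n^{-1}(0,\infty)$. Then $\bigcup_n V_n\supseteq\bigcup_n U_n\supseteq X$, so $\{V_n\}_{n<\omega}$ covers $X$ (giving (i)); each $f_n$ has $\mu$-null zero set (giving (iii)); and since $U_n\subseteq V_n$ are both measurable, the total measure only grows, by $\sum_n\mu(V_n\setminus U_n)\le\sum_n\epsilon 2^{-(n+1)}\le\epsilon$, which is (ii).

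For the local statement I would first exploit that $\mathcal K$ is a vector lattice: for $g,h\in\mathcal K$ one has $g^{-1}(0,\infty)\cap h^{-1}(0,\infty)=(g\wedge h)^{-1}(0,\infty)$ and $g^{-1}(0,\infty)\cup h^{-1}(0,\infty)=(g\vee h)^{-1}(0,\infty)$, so finite unions and intersections of positivity sets are again positivity sets. Writing $U$ in disjunctive normal form as a finite union of finite intersections of literals $\{g_i>0\}$ and complements $\{g_i>0\}^{c}=\{g_i\le 0\}$ (only finitely many generators $g_1,\dots,g_t$ occur), I replace each literal by a slightly larger positivity set using a single small constant shift $\alpha>0$: a positive literal $\{g_i>0\}$ by $\{g_i>-\alpha\}=(g_i+\alpha)^{-1}(0,\infty)$, and a negative literal $\{g_i\le 0\}$ by $\{g_i<\alpha\}=(\alpha-g_i)^{-1}(0,\infty)$. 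Each enlarged literal contains the original one, so reassembling with $\vee,\wedge$ produces a single $f\in\mathcal K$ with $f^{-1}(0,\infty)\supseteq U$; moreover $f^{-1}(0,\infty)\setminus U$ is contained in the union of the sets $\{-\alpha<g_i\le 0\}$ and $\{0<g_i<\alpha\}$, each of which shrinks to a $\mu$-null set as $\alpha\downarrow 0$ by finiteness of $\mu$, so the added measure is $<\delta$ once $\alpha$ is small.

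It remains to force the null zero set, which is the crux. The zero set of the assembled $f$ lies inside the union of the level sets $\{g_i=-\alpha\}$ and $\{g_i=\alpha\}$, since $\{g\vee h=0\},\{g\wedge h=0\}\subseteq\{g=0\}\cup\{h=0\}$; thus it suffices to choose $\alpha$ making all these level sets $\mu$-null. This is exactly where I invoke the inessential-value machinery with the positive linear functional $I(\cdot)=\int\cdot\,d\mu$: as in Remark \ref{DecSeqCores}, the decreasing sequence corresponding to $\beta$ for $g_i$ satisfies $I(h_n)\to\mu(g_i^{-1}(\{\beta\}))$, so $\beta$ is inessential for $g_i$ precisely when $\mu(g_i^{-1}(\{\beta\}))=0$. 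Applying Lemma \ref{nullpoints} to the finite family $\{g_1,\dots,g_t,-g_1,\dots,-g_t\}$ and a small interval $(0,\eta)$ (in the nondegenerate case $\mu(Y)\ne 0$; if $\mu(Y)=0$ everything is trivial) yields $\alpha\in(0,\eta)$ inessential for every member, i.e. with $\mu(\{g_i=\alpha\})=\mu(\{g_i=-\alpha\})=0$ for all $i$. Shrinking $\eta$ so the measure estimate of the previous paragraph also holds completes the local statement.

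The main obstacle is this last step. One cannot break the zero set of a single generator by lattice operations and scaling alone, since these only build functions of the form $\varphi\circ g_i$ with $\varphi(0)=0$, whose zero sets still contain $\{g_i=0\}$; genuine use of the small shift to an inessential level value is therefore essential, and with it the combination of Lemma \ref{nullpoints} and the finiteness of $\mu$ (note that the shifts $g_i+\alpha$, $\alpha-g_i$ require the constant functions to lie in $\mathcal K$, consistent with the standing assumption that these lattices contain the unit). A secondary point needing care is that every replacement must be an enlargement of \emph{actual} sets rather than an approximation up to null sets, because $X$ is an arbitrary, possibly non-measurable subset of $Y$ and discarding even a $\mu$-null set could uncover points of $X$ and destroy the covering.
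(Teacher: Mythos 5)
Your overall architecture (DNF decomposition, enlarging the type-II literals $\{g_i\le 0\}$ to $\{g_i<\alpha\}$, and using inessential values to kill level sets) matches the paper's proof up to a point, but there is a genuine gap in how you handle requirement (iii) for the \emph{positive} literals, and it is precisely there that the paper does something you are missing. You replace $\{g_i>0\}$ by $\{g_i>-\alpha\}$ and assert that the excess $\{-\alpha<g_i\le 0\}$ ``shrinks to a $\mu$-null set as $\alpha\downarrow 0$.'' It does not: $\bigcap_{\alpha>0}\{-\alpha<g_i\le 0\}=\{g_i=0\}$, which is exactly the set you cannot assume to be null (if it were null, $\{g_i>0\}$ would already be nice and no modification would be needed). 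So for every $\alpha>0$ your enlargement adds at least $\mu(\{g_i=0\})$ of measure, and (ii) fails whenever $\mu(\{g_i=0\})>\epsilon$. Worse, your ``local statement'' (one nice positivity set per $U$) is false, not merely unproved: take $Y=[0,2]$ with Lebesgue measure, $g=\max(0,x-1)$, and $\mathcal K$ the vector lattice generated by $g$ and the constants, so that every $f\in\mathcal K$ has the form $\varphi\circ g$ with $\varphi$ continuous piecewise linear. For $U=\{g>0\}=(1,2]$, the conditions $f^{-1}(0,\infty)\supseteq U$ and $\mu(f^{-1}(0,\infty)\setminus U)<1$ force $\varphi>0$ on $(0,1]$ and $\varphi(0)\le 0$, while $\mu(f^{-1}(\{0\}))=0$ forces $\varphi(0)<0$ (since $\{g=0\}=[0,1]$); continuity of $\varphi$ makes these incompatible. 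Hence no single $f\in\mathcal K$ works.

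The paper's way around this is the idea your proposal lacks: a bad set $U'_n=f_n^{-1}(0,\infty)$ is not enlarged but is replaced by \emph{countably many} nice positivity sets contained in it whose union is exactly $U'_n$. Choosing $a^n_1>a^n_2>\cdots\downarrow 0$ with every level set $f_n^{-1}(\{a^n_j\})$ null (your inessential-value argument is the right tool for this choice), one writes
$$U'_n=f_n^{-1}(a^n_2,\infty)\ \cup\ \bigcup_{j<\omega}f_n^{-1}(a^n_{j+2},a^n_j),$$
each piece being a nice positivity set; since the pieces sit inside $U'_n$ and overlap with multiplicity at most two near $0$, their total measure exceeds $\mu(U'_n)$ by at most roughly $2\mu(f_n^{-1}(0,a^n_1))$, which is made small by taking $a^n_1$ small (this is where finiteness of $\mu$ is actually used). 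Since the lemma only asks for \emph{some} countable covering $\{V_n\}$, not a set-by-set correspondence, this is legitimate. Your handling of the negative literals and your observation that the replacements must be honest set inclusions (because $X$ may be non-measurable) are both correct and agree with the paper; the proof as proposed, however, does not go through without the countable-splitting step.
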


\begin{proof}
	For convenience, we call a measurable subset $D$ of $Y$ a type I subset if there is some $f \in \mathcal{K}$ such that $D=f^{-1}(0,\infty)$.
	We call $D$ a nice type I subset if there exists such $f$ with the additional property that $\mu(f^{-1}(\{0\}))=0$.
	Similarly, we call a measurable subset $D$ a type II subset if there is some $f \in \mathcal{K}$ such that $D=f^{-1}[0,\infty)$.
	Since $(f^{-1}(0,\infty))^c=(-f)^{-1}[0,\infty)$, by using disjunctive normal form representation of members of Boolean algebras, every element of $\mathcal{C}$, such as $U_n$'s, can be represented as a
	finite union of finite intersections of type I or type II sets.
	We call such representation of any $U_n$ a good representation of it.
	
	Let $\epsilon>0$ be fixed. Also for each $n$, fix a good representation of $U_n$. For each $n$, let $U'_n$ be the modification of $U_n$ 
	by replacing the clauses of
	the form $f^{-1}[0, \infty)$ in the mentioned good representation of $U_n$ by some bigger sets
	$f^{-1}(-\delta_n,\infty)$ for some small enough positive real numbers $\delta_n$'s,
	in such a way that the difference between the sum of the measures of $U'_n$'s, namely $\sum_{n<\omega}\mu(U'_n)$,
	and that of $U_n$'s, namely $\sum_{n<\omega}\mu(U_n)$, is not more than $\epsilon$.
	It is easily seen that the family of $U'_n$'s is a covering of $X$.
	We note that for every $f \in \mathcal{K}$ and real number $\delta$, we have $f^{-1}(-\delta,\infty)=f'^{-1}(0,\infty)$ where $f'=f+\delta$. Since $\mathcal{K}$ is a vector lattice, obviously $f' \in \mathcal{K}$.
	Similarly, for every $f_1,f_2 \in \mathcal{K}$, we have $f_1^{-1}(0,\infty) \cap f_2^{-1}(0,\infty)=g^{-1}(0,\infty)$ and $f_1^{-1}(0,\infty) \cup f_2^{-1}(0,\infty)=h^{-1}(0,\infty)$ where
	$g= f_1 \wedge f_2$ and $h=f_1 \vee f_2$ respectively. Again, since $\mathcal{K}$ is a vector lattice, clearly $g,h \in \mathcal{K}$.
	Now using these facts,
	it is not hard to see that for each $n$, we have $U'_n=f_n^{-1}(0,\infty)$ for some $f_n \in \mathcal{K}$.
	So $U'_n$'s are type I subsets of $Y$.
	Let denote the family of $U'_n$'s by $\mathcal{U}$.
	We remind that $\mathcal{U}$ is a covering of $X$.
	If all $U'_n$'s are nice type I subsets, then we are done. Otherwise, $I \not = \emptyset$ where $I \subseteq \mathbb{N}$ is the set of all $n \in \mathbb{N}$ such that $U'_n$ is not nice
	(so, for each $n \in I$, we have $\mu(f_n^{-1}(\{0\}))>0$).
	In that case, in the following procedure, we will replace some members of $\mathcal{U}$ with some families of subsets of $Y$ in such a way that after these replacements, our new $\mathcal{U}$
	still remains a covering of $X$,
	the sum of the measures does not differ with more than $\epsilon$ and moreover, our new $\mathcal{U}$ only contains nice type I subsets of $Y$.
	The procedure is as follows.
	
	Corresponding to each $n \in I$, we can find a sequence $a^n_1,a^n_2,\ldots$ of distinct positive real numbers decreasing to 0
	such that the followings hold.
	
	\vspace{2mm}
	
	(i) $\mu(f_n^{-1}(\{a^n_j\}))=0$ for each $j<\omega$.
	
	\vspace{2mm}
	
	(ii) $\mu(U''_n)$ does not differ from $\mu(U'_n)$ with more than $\frac{\epsilon}{4^n}$, where $U''_n:=f_n^{-1}(a^n_2,\infty)$,
	
	\vspace{2mm}
	
	(iii) $\sum_{j=1}^{\infty} \mu(U''_{n,j}) \leqslant \frac{\epsilon}{4^n}$, where for every $j<\omega$, $U''_{n,j}:=f_n^{-1}(a^n_{j+2},a^n_j)$,
	
	\vspace{2mm}

	It is easy to see that for each $n \in I$, $U''_n=p_n^{-1}(0,\infty)$ where $p_n:=f_n-a^n_2$.
	Also for $j<\omega$, we have $U''_{n,j}=f_n^{-1}(a^n_{j+2},a^n_j)=(h_{n,j})^{-1}(0,\infty)$ where $h_{n,j}:=(f_n-a^n_{j+2}) \wedge (a^n_j-f_n) \in \mathcal{K}$.
	So $U''_{n,j}$'s and $U''_n$'s are all type I subsets of $Y$. Also (i) guarantees that
	they are all nice type I subsets.
	Now for each $n \in I$, we remove the member $U'_n$ from the family $\mathcal{U}$ and instead, add $U''_n$ and all $U''_{n,j}$ (for each $j<\omega$) to $\mathcal{U}$.
	Now, this new $\mathcal{U}$ is a family of nice type I subsets of $Y$.
	Furthermore, for each $n \in I$, we have
	$$\big(\bigcup_{j<\omega} U''_{n,j}\big) \bigcup U''_n=\big(\bigcup_{j<\omega} f_n^{-1}(a^n_{j+2},a^n_j)\big) \bigcup f_n^{-1}(a^n_2,\infty)=f_n^{-1}(0,\infty)=U'_n.$$ 
	So, having this, it is not hard to see that $\mathcal{U}$ is still a countable covering of $X$.
	Also (ii) and (iii) guarantee that for each $n \in I$, sum of the measures of $U''_n$ and all $U''_{n,j}$'s does not differ from $\mu(U'_n)$ with more than $\frac{\epsilon}{2^n}$.
	So, sum of measures of members of our new $\mathcal{U}$ does not differ from sum of measures of $U_n$'s with more than $\epsilon$.
	\hfill $\square$
\end{proof}

\section{Logical compactness theorem and new proofs for some classical measure existence theorems}\label{applications}
Existence theorems appear in many branches of mathematics.
Logical compactness theorem (Theorem \ref{logicalcompactnesstheorem}) is itself an existence theorem and in fact
a fundamental one.
As we will see, it can be used in measure theory in a systematic way
to give relatively easy uniform proofs for many measure existence theorems.
Meantime, some interesting mathematical theories are axiomatized in the setting of integration logic.
To start with, in the following remark, we mention some basic measure theoretic properties expressible in this setting.
\begin{rem}\label{somesampleexpresibleproperties}
The expression
"the space has total measure $1$" is stated by
$\int 1\ dx=1$. 
Also for any formulas $\phi(x)$ and $\psi(x)$ with the same free variables $x$ (in a relevant language in the integration logic as defined before), the expressions 
"$\phi(x)=0$ almost everywhere" and "$\phi(x)=\psi(x)$ almost everywhere" are stated by the closed statements $\int\ |\phi(x)| \ dx=0$ and 
$\int\ |\phi(x)-\psi(x)| \ dx=0$ in integration logic, where we remind that the interpretation of $\phi$ and $\psi$ are measurable functions on our measure space.
Let $A=\{r_1,\ldots,r_n\}$ be any finite subset of $\Rn$.
Then, the similar expression "range$(\phi) \in A \ (a.e)$",
is expressible by the closed statement
$$\int |(\phi(x)-r_1).(\phi(x)-r_2)\ldots(\phi(x)-r_n)|dx=0.$$
\end{rem}

\subsection{Stone's representation theorem for probability algebras}
The first applications of the logical compactness theorem we present in this paper is a new proof for the Stone's representation theorem for probability algebras.
Recall that a Boolean algebra is $\sigma$-complete if every countable non-empty subset $a_1,a_2,\ldots$ of it
has a least upper bound $\vee_i a_i$ (or $\sup_{i<\omega}a_i$)
and a greatest lower bound $\wedge_i a_i$ (or $\inf_{i<\omega}a_i$).
A measure algebra
(see for example \cite{FremlinMeasuretheoryvol3} Definition 321A)
is a $\sigma$-complete Boolean algebra
$(B,\wedge,\vee,\ ',\textbf{0},\textbf{1})$ equipped with a
map $\mu:B\rightarrow[0,\infty]$ such that
(i) $\mu(a)=0$ if and only if $a=\textbf{0}$, and (ii) if $a_1,a_2,\ldots$ are pairwise disjoint (i.e. $a_i \wedge a_j=\textbf{0}$ for every distinct $i$ and $j$), then $\mu(\vee_i a_i)=\sum_i\mu(a_i)$.
Note that the notations $\wedge$, $\vee$ and $'$ in here stand for their corresponding operations 
in the Boolean algebra $B$ and shouldn't be confused with the logical connectives defined before (with the same notations $\wedge$ and $\vee$) which stand for the "max" and "min" of two logical formulas.
If $\mu(\textbf{1})=1$, the measure algebra is called a probability algebra.
A $\sigma$-order-continuous isomorphism (or sequentially order-continuous isomorphism) (see \cite{FremlinMeasuretheoryvol3} Definition 313H) between measure algebras $B_1,B_2$ is a measure preserving Boolean isomorphism
$\phi:B_1\rightarrow B_2$ such that $\phi(\vee_i a_i)=\vee_i \phi(a_i)$ for every increasing
sequence $a_1,a_2,...$ in $B_1$.
We recall that in any Boolean algebra, a partial order relation $\leqslant$ is naturally defined by $a \leqslant b$ if and only if $a \wedge b =a$.

To every probability space $(M,\mathcal{A},\bar\mu)$ is associated
a probability algebra as follows.
Say $X_1, X_2 \in \mathcal{A}$ are equivalent if their symmetric difference is null.
The equivalence class of $X$ is denoted by $[X]$.
Then the set of equivalence classes forms a Boolean algebra in the natural way and
$\mu([X])=\bar\mu(X)$ makes of it a probability algebra.

A classical proof for Stone's representation theorem for measure algebras could be found in for example \cite{FremlinMeasuretheoryvol3}(321J). In the following, for simplicity, we prove the theorem for probability algebras. But it is easy to see that a slight modification of the following proof gives rise to a proof for general finite measure algebras.

\begin{thm}
\emph{(Stone's representation theorem for probability algebras)}
Let $(B,\mu)$ be a probability algebra. Then, there is a probability space $(M,\mathcal{B},\bar\mu)$
whose associated probability algebra is $\sigma$-order-continuous isomorphic to $(B,\mu)$.
\end{thm}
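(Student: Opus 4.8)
The plan is to follow the general strategy outlined in the introduction: introduce a language whose relation symbols name the elements of $B$, write down a theory $T$ in integration logic asserting that these symbols behave like characteristic functions of the sets representing the elements of $B$, prove $T$ is finitely satisfiable, apply the logical compactness theorem (Theorem \ref{logicalcompactnesstheorem}), and finally read off the desired probability space and isomorphism from the resulting model. Concretely, I would take a language $\mathcal{L}$ containing, for each $a\in B$, a unary relation symbol $R_a$ with universal bound $\flat_{R_a}=1$. The theory $T$ consists of the following closed statements: the normalization $\int 1\,dx=1$; for each $a$, the statement $\int |R_a(x)\cdot(R_a(x)-1)|\,dx=0$ (forcing $R_a$ to be $\{0,1\}$-valued almost everywhere); the Boolean statements $\int|R_{a\wedge b}(x)-R_a(x)\cdot R_b(x)|\,dx=0$ and $\int|R_{a'}(x)-(1-R_a(x))|\,dx=0$ together with $\int|R_{\mathbf{0}}(x)|\,dx=0$ and $\int|R_{\mathbf{1}}(x)-1|\,dx=0$; and the measure statement $\int R_a(x)\,dx=\mu(a)$ for each $a\in B$. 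All of these are legitimate statements of integration logic in the sense of Definition \ref{defformulasinintegrallogic}.

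For finite satisfiability, let $T_0\subseteq T$ be finite. Only finitely many elements of $B$ are named in $T_0$; let $B_0$ be the (necessarily finite) Boolean subalgebra of $B$ they generate and let $c_1,\dots,c_k$ be its atoms. Since $(B,\mu)$ is a probability algebra, each $c_i\neq\mathbf{0}$, hence $\mu(c_i)>0$, and $\sum_i\mu(c_i)=\mu(\mathbf{1})=1$. I would then take the finite probability space $M_0=\{p_1,\dots,p_k\}$ with $\bar\mu(\{p_i\})=\mu(c_i)$ and every subset measurable, and interpret each named $R_a$ as the characteristic function of $\{p_i: c_i\leqslant a\}$. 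Because $a=\bigvee\{c_i:c_i\leqslant a\}$ in $B_0$ and the atoms are disjoint, every statement of $T_0$ is satisfied exactly in $M_0$. Hence $T$ is finitely satisfiable, and the compactness theorem yields a model $M=(M,\mathcal{B},\bar\mu)$, a probability space with $\bar\mu(M)=1$.

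From $M$ I would build the isomorphism. Since $\int|R_a(x)\cdot(R_a(x)-1)|\,dx=0$ holds in $M$, the function $R_a^M$ is $\{0,1\}$-valued a.e.; set $A_a=\{x:R_a^M(x)=1\}$ and define $\Phi(a)=[A_a]$ in the measure algebra of $M$. The remaining statements of $T$ make $\Phi$ a Boolean homomorphism preserving the measure: for instance $\int|R_{a\wedge b}-R_a\cdot R_b|\,dx=0$ gives $A_{a\wedge b}=A_a\cap A_b$ up to a null set, and $\int R_a\,dx=\mu(a)$ gives $\bar\mu(A_a)=\mu(a)$. Injectivity follows from the probability-algebra axiom: if $\Phi(a)=\Phi(b)$ then, writing $c$ for the symmetric difference $(a\wedge b')\vee(a'\wedge b)$, one gets $\mu(c)=\bar\mu(A_c)=0$, whence $c=\mathbf{0}$ and $a=b$. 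Moreover $\Phi$ is automatically $\sigma$-order-continuous: for an increasing sequence $a_1\leqslant a_2\leqslant\cdots$ with $a=\bigvee_i a_i$, the statements force $A_{a_i}\subseteq A_a$ a.e., and continuity from below of $\bar\mu$ combined with $\bar\mu(A_{a_i})=\mu(a_i)\to\mu(a)=\bar\mu(A_a)$ yields $\bigvee_i[A_{a_i}]=[A_a]=\Phi(a)$.

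The step I expect to be the real obstacle is surjectivity, since compactness may produce a measure space whose measure algebra is strictly larger than $\Phi(B)$. To fix this I would not use the full $\sigma$-algebra $\mathcal{B}$ but rather the sub-$\sigma$-algebra $\mathcal{B}_0=\sigma(\{A_a:a\in B\})$ generated by the representing sets, equipped with $\bar\mu|_{\mathcal{B}_0}$. It then suffices to show that the class $\mathcal{D}=\{E\in\mathcal{B}_0:[E]\in\Phi(B)\}$ is a $\sigma$-algebra; it clearly contains every $A_a$ and is closed under complements (as $\Phi$ preserves them), while closure under countable unions follows from the $\sigma$-completeness of $B$ together with the $\sigma$-order-continuity of $\Phi$ established above: given $E_i$ with $[E_i]=\Phi(a_i)$, one has $[\bigcup_i E_i]=\bigvee_i\Phi(a_i)=\Phi(\bigvee_i a_i)\in\Phi(B)$. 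Therefore $\mathcal{D}=\mathcal{B}_0$, so $\Phi$ maps $B$ onto the measure algebra of $(M,\mathcal{B}_0,\bar\mu|_{\mathcal{B}_0})$, and $\Phi$ is the required $\sigma$-order-continuous measure-preserving isomorphism, completing the proof.
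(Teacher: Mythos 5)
Your proposal is correct and follows essentially the same route as the paper: the same language of unary relation symbols $R_a$, an equivalent theory (the paper axiomatizes $R_{a\vee b}\stackrel{a.e}{=}R_a\vee R_b$ where you use $R_{a\wedge b}\stackrel{a.e}{=}R_a\cdot R_b$, interchangeable via the complement axiom), the same finite-satisfiability argument on the atoms of a finite subalgebra, and the same post-compactness analysis (injectivity via the symmetric difference, continuity from below for increasing sequences, and surjectivity by restricting to the $\sigma$-algebra generated by the representing sets). Your ``good sets'' class $\mathcal{D}$ is just a tidier phrasing of the paper's closing argument that the image of $\phi$ is closed under countable unions, intersections and complements.
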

\begin{proof}
Let $\mathcal L$ be a language (as defined in Subsection \ref{prelimmeasureintlog}) consisting of a unary relation symbol $R_a$ with universal bound $1$ for each $a\in B$. Let $T$ be a $\mathcal{L}$-theory consisting of the following expressions (axioms) which can be carefully written as some closed statements in integration logic in the language $\mathcal{L}$ (one can get help from Remark \ref{somesampleexpresibleproperties} for stating them).

\begin{enumerate}
\item{$R_a(x) \stackrel{a.e}{=} 0$ or $1$ \ \ \ \ \ \ (for each $a \in B$),}\label{StoneprobalgaxiomRa0or1}

\item{$\int R_a(x)dx=\mu(a)$ \ \ \ \ \ \ (for each $a \in B$),}\label{Stoneprobalgaxiomintmeasureequalintegral}

\item{$R_{a\vee b}(x) \stackrel{a.e}{=} R_a(x)\vee R_b(x)$ \ \ \ \ \ \ (for each $a,b \in B$),}\label{Stoneprobalgaxiomdisjunction}

\item{$R_{a'}(x) \stackrel{a.e}{=} 1-R_a(x)$ \ \ \ \ \ \ (for each $a \in B$).}\label{Stoneprobalgaxiomcomplement}

\end{enumerate}

Note that in axiom \ref{Stoneprobalgaxiomdisjunction}, the notation $\vee$ in the left side of the equality addresses the Boolean algebra operation while in the right side refers to the logical connective "max" between two formulas (as defined after Definition \ref{defformulasinintegrallogic}).
We will show that $T$ is finitely satisfiable.
Let $T_0$ be a finite subset of axioms of $T$.
Let $B_0$ be a finite sub measure algebra of $B$ containing every $a\in B$
for which $R_a$ appears in axioms in $T_0$.
Also let $M=\{a_1,...,a_k\}$ be the atomic elements of $B_0$, where $a \in B_0$ is called an atom of $B_0$ if given any $b \in B_0$ such that $b \leqslant a$, either $b=0$ or $b=a$.
Then, it is not hard to see that $\mu$ induces a probability measure $\nu$ on finite space $(M,P(M))$.
We prove the satisfiability of $T_0$ by making a model of it over the underlying finite measure space $\mathcal{M}=(M,P(M),\nu)$.
For that, we need to interpret relation symbols $R_a$'s ($a \in B$) in $\mathcal{M}$.
For each $a\in B_0$, interpret $R_a$ with the function $R_a^M$ defined by $R_a^M(a_i)=1$ if $a_i\leqslant a$ and $=0$ otherwise, for any $a_i \in M$.
Also for any $a \in B \setminus B_0$, interpret $R_a$ with any arbitrary $\{0,1\}$-valued function on $M$.
Then, it is not very difficult to see that the resulting $\mathcal L$-structure is a model of $T_0$.
This shows that $T$ is finitely satisfiable.

By logical compactness theorem (Theorem \ref{logicalcompactnesstheorem}), $T$ has a model, say $(M,\mathcal{C},\bar\mu; R^M_a)_{a\in B}$, where each $R^M_a$ is the interpretation of the relation symbol $R_a$ in this model.
Note that by definition of a model, $\bar\mu(M)=1$.
Also each $R^M_a$ is a measurable function on $M$ with respect to the $\sigma$-algebra $\mathcal{C}$.
Let $\mathcal{B} \subseteq \mathcal{C}$ be the smallest $\sigma$-algebra making every $R^M_a$ measurable. Also restrict $\bar\mu$ to $\mathcal{B}$ and still denote the restricted measure by $\bar\mu$. We claim that $(M,\mathcal{B},\bar\mu)$ is the desired measure space whose associated probability algebra is $\sigma$-order-continuous isomorphic to $B$.
It is not hard to see that by axiom \ref{StoneprobalgaxiomRa0or1}, each $R_a^M$ is a characteristic function (up to a null set).
Let $X_a:=\{x \in M: R_a^M(x)=1\}$ for every $a \in B$. Obviously, every $X_a$ belongs to $\mathcal{B}$. 
For every $A \in \mathcal{B}$, let $[A]$ to be the equivalence class of $A$ in $D$, where we define $D$ to be the associated probability algebra to $(M,\mathcal{B},\bar\mu)$.
Since each $R_a^M$ is a characteristic function (up to a null set) of the subset $X_a$, it is easy to see that the measure algebra $D$ is the same as the measure algebra associated to the restriction of the measure space $(M,\mathcal{B},\bar\mu)$ to the sub $\sigma$-algebra generated by $X_a$'s.

Define $\phi: B \rightarrow D$ by $\phi(a):=[X_a]$. We claim that $\phi$ is a measure algebra $\sigma$-order-continuous isomorphism.
We first check the injectivity of $\phi$.
Assume that $\phi(a)=\phi(b)$ for some $a,b \in B$. So $[X_a]=[X_b]$ which follows that $X_a \stackrel{a.e}{=} X_b$ with respect to the measure $\bar \mu$.
Then $X_a \triangle X_b$ is null. 
One can use the axioms to show that $X_a \triangle X_b \stackrel{a.e}{=} X_{a \underline\triangle b}$ where by $a \underline\triangle b$ in $B$ we mean the elements $(a \wedge b') \vee (a' \wedge b)$.
So $\bar \mu(X_{a \underline\triangle b})=0$.
Hence, by axiom \ref{Stoneprobalgaxiomintmeasureequalintegral}, we have
$\mu(a \underline\triangle b)=\int R_{a \underline\triangle b}^M=\bar\mu(X_{a \underline\triangle b})=0$.
Now, by definition of a measure algebra, we have $a \underline\triangle b=\textbf{0}$ which follows that $a=b$.
Therefore, $\phi$ is injective. It is also easy to see that $\phi(a')=\phi(a)'$ for every $a \in B$.

\vspace{1mm}

\textit{Claim.} Let $(b_i)_{i<\omega}$ be a sequence of elements of $B$. Then, $\phi(\bigvee_{i<\omega}b_i)=\bigvee_{i<\omega}\phi(b_i)$ and $\phi(\bigwedge_{i<\omega}b_i)=\bigwedge_{i<\omega}\phi(b_i)$.

\vspace{1mm}

\textit{Proof of Claim.}
First assume that $(b_i)_{i < \omega}$ is an increasing sequence of elements of $B$ and let $b:=\sup_{i<\omega}b_i$.
By using the axioms, it is easy to see that $X_{b_i} \stackrel{a.e}{\subseteq} X_b$ and $X_{b_i} \stackrel{a.e}{\subseteq}  X_{b_{i+1}}$ for each $i$.
So, $\bigcup_{i<\omega} X_{b_i} \stackrel{a.e}{\subseteq} X_b$. 
On the other hand, again by axioms, we have $\bar\mu(X_b)= \int R^M_b=\mu(b)$ and similarly, $\bar\mu(X_{b_i})=\mu(b_i)$ for each $i$.
Since $(b_i)_{i<\omega}$ is an increasing sequence in the measure algebra $B$, by a known fact (see for example in \cite{FremlinMeasuretheoryvol3}-321B), we have $\mu(\sup_{i<\omega}(b_i))=\lim_{i \rightarrow \infty} \mu(b_i)=\sup_{i<\omega} \mu(b_i)$. So we have

$$\bar\mu(\bigcup_{i<\omega} X_{b_i})=\sup_{i<\omega}\bar\mu(X_{b_i})
=\sup_{i<\omega}\mu(b_i)=\mu(\sup_{i<\omega}(b_i))=\mu(b)=\bar\mu(X_b).$$

Combination of the above facts follows that $X_b \stackrel{a.e}{=} \bigcup_{i < \omega} X_{b_i}$.
Thus, $[X_b]=[\bigcup_{i<\omega} X_{b_i}]$. 
Moreover, we have
$$\phi(\bigvee_{i<\omega}b_i)=\phi(b)=[X_b]=[\bigcup_{i < \omega} X_{b_i}]=\bigvee_{i<\omega}[X_{b_i}]=\bigvee_{i<\omega}\phi(b_i). \ \ \ \ \ \ (1)$$

Now assume that $(b_i)_{i<\omega}$ is an arbitrary (not necessarily increasing) sequence of elements of $B$ and let $b:=\sup_{i<\omega}b_i$. Let $c_i:=\bigvee_{j=1}^{i} b_j$. 
Now $(c_i)_{i<\omega}$ is an increasing sequence and by $(1)$, $\phi(\bigvee_{i<\omega}c_i)=\bigvee_{i<\omega}[X_{c_i}]$.
So 
$$\phi(\bigvee_{i<\omega}b_i)=\phi(\bigvee_{i<\omega}c_i)=\bigvee_{i<\omega}[X_{c_i}]=\bigvee_{i<\omega}[X_{\bigvee_{j=1}^{i} b_j}]=\bigvee_{i<\omega}(\bigvee_{j=1}^i[X_{b_j}])=\bigvee_{i<\omega} [X_{b_i}]=\bigvee_{i<\omega}\phi(b_i).$$

Moreover, by using this, we also have
$$\phi(\bigwedge_{i<\omega}b_i)=\phi((\bigvee_{i<\omega}b_i')')=
(\phi(\bigvee_{i<\omega}b_i'))'
=(\bigvee_{i<\omega}\phi(b_i'))'
=(\bigwedge_{i<\omega}(\phi(b_i'))')
=\bigwedge_{i<\omega}\phi(b_i).$$

It completes the proof of the claim.
\hfill \textit{Claim} $\square$
\end{proof}

Now we prove the surjectivity of $\phi$.
We remind from above that measure algebra $D$ is the same as the measure algebra associated to the measure space $(M,\mathcal{B}',\bar\mu|_{\mathcal{B}'})$, where $\mathcal{B}'$ is the sub $\sigma$-algebra generated by $X_a$'s.
But by definition of a generated $\sigma$-algebra, $\mathcal{B}'$ is 
the closure of the family of basic sets $X_a$'s under the operations "countable unions", "countable intersections" and "complement". 
So, it is not hard to verify that for showing that $\phi$ is surjective, it would be enough to prove that for any sequence $(b_i)_{i < \omega}$ of elements of $B$,
$\bigvee_{i<\omega} [X_{b_i}]$ and $\bigwedge_{i<\omega} [X_{b_i}]$ are in the image of $\phi$.
But by the above claim, we have 
$\bigvee_{i<\omega} [X_{b_i}]=\bigvee_{i<\omega}\phi(b_i)=\phi(\bigvee_{i<\omega}b_i) \in \phi(B)$ and 
$\bigwedge_{i<\omega} [X_{b_i}]=\bigwedge_{i<\omega}\phi(b_i)=\phi(\bigwedge_{i<\omega}b_i) \in \phi(B)$.
It follows that $\phi$ is surjective.
Similarly, using the above claim and arguments, it is not hard to see that $\phi$ is $\sigma$-order-continuous and measure-preserving Boolean isomorphism. So, it is a measure algebra $\sigma$-order-continuous isomorphism.
\hfill $\square$

\subsection{Daniell-Stone theorem for Daniell integrals}
In this subsection, we give a new proof for the classical Daniell-Stone theorem. Like the proofs of the other theorems in this paper, this proof is also using the logical compactness theorem as an essential tool.
One can find a classical proof for Daniell-Stone theorem for example in the book \cite{RoydenRealAnalysis}.
Let $\mathcal{A}$ be a vector lattice (always over $\mathbb{R}$ in this paper) of real functions on a set $X$
containing $1_X$ (the function with value $1$ on every $x \in X$). A Daniell integral on $\mathcal{A}$ is a positive
linear order-continuous real-valued function $I$ on $\mathcal{A}$ where
by order-continuity is meant $I(f_n)\downarrow0$ whenever $f_n\downarrow0_X$ pointwise where $0_X$ is the function with value $0$ on every $x \in X$. Since now on, when there is no danger of confusion, we use $1$ and $0$ instead of $1_X$ and $0_X$.
The Daniell-Stone theorem roughly states that there exists a measure $\mu$
on $X$ such that $I$ is the integration with respect to $\mu$.
By a lattice-linear combination in a vector lattice, we mean an expression obtained by combination of finitely many elements of the vector lattice by some linear and lattice operations. For example, if $x,y, z$ are elements of a vector lattice, then $x+(2y\wedge z)$ and $(x\vee y)\wedge(y-z)$ are some lattice-linear combinations.

\begin{thm}\label{DaniellStone}
\emph{(Daniell-Stone theorem)} 
Let $\mathcal{A}$ be a vector lattice of real functions on $X$
such that $1\in \mathcal{A}$. Let $\mathcal{E}$ be the $\sigma$-algebra generated by
$\mathcal{A}$ (i.e. the smallest $\sigma$-algebra making every function in $\mathcal{A}$ measurable).
Then, for each Daniell integral $I$ on $\mathcal A$, there is a measure $\rho$ on
$\mathcal{E}$ such that $I(f)=\int f d\rho$ for every $f\in \mathcal{A}$.
\end{thm}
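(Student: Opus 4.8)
The plan is to realize the abstract pair $(\mathcal{A},I)$ inside a genuine probability space manufactured by the logical compactness theorem, and then to transport the resulting measure back onto $X$. Throughout I may assume $I(1)=1$: if $I(1)=0$, then positivity together with order-continuity forces $I(f)=0$ for every $f$ (truncate $f^{+}$ and $f^{-}$ by $n\cdot 1$ and use $I((f-n\cdot 1)^{+})\downarrow 0$), so the zero measure works; otherwise I rescale $I$ by $1/I(1)$ and rescale the measure at the very end. Let $\mathcal{A}^{b}$ denote the bounded members of $\mathcal{A}$; this is again a vector lattice containing $1$, and since the truncations $f_n=(f\wedge n\cdot 1)\vee(-n\cdot 1)$ lie in $\mathcal{A}^{b}$ and converge pointwise to $f$, one has $\sigma(\mathcal{A}^{b})=\sigma(\mathcal{A})=\mathcal{E}$. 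I take a language with one unary symbol $R_f$ for each $f\in\mathcal{A}^{b}$, with universal bound $\lceil\sup_x|f(x)|\rceil$, and let $T$ be the theory asserting, for all $f,g\in\mathcal{A}^{b}$ and $r\in\mathbb{R}$: $\int|R_{f+g}-R_f-R_g|\,dx=0$ and $\int|R_{rf}-rR_f|\,dx=0$ (linearity); $\int|R_{f\vee g}-(R_f\vee R_g)|\,dx=0$ (lattice); $\int|R_1-1|\,dx=0$ (unit); and $\int R_f\,dx=I(f)$ (integral). A model of $T$ is precisely a probability space carrying an almost-everywhere lattice-linear copy of $\mathcal{A}^{b}$ on which integration reproduces $I$.

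For finite satisfiability, take a finite $T_0\subseteq T$ mentioning the symbols $R_h$ with $h$ ranging over a finite set $F\subseteq\mathcal{A}^{b}$, and consider the evaluation map $e_X\colon X\to\mathbb{R}^{F}$, $e_X(x)=(h(x))_{h\in F}$, together with the point $v=(I(h))_{h\in F}$. I claim $v$ lies in the closed convex hull of $e_X(X)$: otherwise finite-dimensional separation yields $c\in\mathbb{R}^{F}$ and $\alpha$ with $\sum_{h\in F}c_h\,h\le\alpha\cdot 1$ pointwise yet $\sum_{h\in F}c_h\,I(h)>\alpha$, contradicting positivity of $I$, which gives $I(\sum_{h\in F}c_h\,h)\le\alpha\,I(1)=\alpha$. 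Hence $v$ is approximated to within any $\varepsilon>0$ by a finitely supported probability measure $\nu$ on $X$; interpreting each $R_h$ as the genuine function $h$ restricted to the support of $\nu$ makes all linear, lattice and unit axioms hold exactly (they are literal pointwise identities among the $h$) and the integral axioms hold within $\varepsilon$. By Lemma \ref{approximate}, $T$ is therefore finitely satisfiable.

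By Theorem \ref{logicalcompactnesstheorem}, $T$ has a model $(M,\mathcal{C},\bar\mu)$ with $\bar\mu(M)=1$, on which $\mathcal{K}:=\{R_f^{M}:f\in\mathcal{A}^{b}\}$ is a vector lattice of bounded measurable functions, $f\mapsto R_f^{M}$ is an almost-everywhere lattice-linear homomorphism, and $\int_M R_f^{M}\,d\bar\mu=I(f)$. The fourth step transfers $\bar\mu$ to $X$ by matching the masses of level sets of $f$ on $X$ with those of $R_f^{M}$ on $M$. Concretely I set $e_M\colon M\to\mathbb{R}^{\mathcal{A}^{b}}$, $e_M(m)=(R_f^{M}(m))_f$, and $e_X\colon X\to\mathbb{R}^{\mathcal{A}^{b}}$, $e_X(x)=(f(x))_f$, let $\lambda:=(e_M)_{*}\bar\mu$ be the pushforward on the product Borel $\sigma$-algebra, and note that $\mathcal{E}=e_X^{-1}(\text{Borel})$ since $\mathcal{A}^{b}$ generates $\mathcal{E}$. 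Provided the image $e_X(X)$ has full outer $\lambda$-measure, the subspace measure on it pulls back along $e_X$ to a measure $\rho$ on $(X,\mathcal{E})$, and Proposition \ref{Fremlin subspace1}(b) gives $\int_X f\,d\rho=\int \xi_f\,d\lambda=\int_M R_f^{M}\,d\bar\mu=I(f)$ for every $f\in\mathcal{A}^{b}$, where $\xi_f$ is the $f$-th coordinate function.

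The crux, and the step I expect to be the main obstacle, is proving that $e_X(X)$ has full outer $\lambda$-measure, i.e. that $\lambda$ charges no basic box disjoint from $e_X(X)$. Such a box, in coordinates $f_1,\dots,f_k$, corresponds to an empty intersection $\bigcap_{i=1}^{k} f_i^{-1}(U_i)=\emptyset$ on $X$. Applying Lemma \ref{tendtochar} on $X$ produces lattice-linear combinations $\phi_n\in\mathcal{A}^{b}$ of the $f_i$ whose values approximate $\chi(\bigcap_{i=1}^{k} f_i^{-1}(U_i))=0$ and decrease pointwise to $0$ on $X$; order-continuity of $I$ then forces $I(\phi_n)\to 0$, while the identical lattice-linear combinations $R_{\phi_n}^{M}$ of the $R_{f_i}^{M}$ dominate, almost everywhere, the characteristic function of the corresponding set in $M$, so that set is $\bar\mu$-null and the box is $\lambda$-null. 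Lemma \ref{changecover} is what reduces an arbitrary countable cover witnessing the outer measure to one built from such nice level-set boxes without increasing the total mass by more than $\varepsilon$, yielding $\lambda^{*}(e_X(X))=1$. Finally the identity $\int_X f\,d\rho=I(f)$ extends from $\mathcal{A}^{b}$ to all of $\mathcal{A}$ by truncation: for $f\ge 0$ the sequence $f_n\uparrow f$ gives $\int_X f\,d\rho=\lim_n\int_X f_n\,d\rho=\lim_n I(f_n)=I(f)$ by monotone convergence together with order-continuity, and the general case follows by splitting $f=f^{+}-f^{-}$. The essential use of order-continuity here is exactly what one expects, since it is the defining hypothesis distinguishing a Daniell integral.
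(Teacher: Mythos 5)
Your overall architecture matches the paper's (axiomatize $(\mathcal{A},I)$ in integration logic, prove finite satisfiability, apply Theorem \ref{logicalcompactnesstheorem}, transport the resulting measure back to $X$), and two of your steps are genuinely different from the paper and, as far as I can see, correct. Your finite-satisfiability argument via finite-dimensional separation --- placing $(I(h))_{h\in F}$ in the closed convex hull of $\{(h(x))_{h\in F}:x\in X\}$ and reading off a finitely supported approximating probability measure, then invoking Lemma \ref{approximate} --- is considerably slicker than the paper's construction, which partitions the ranges of the $f_i$ at inessential values (Lemma \ref{nullpoints}), builds a measure on the atoms of the resulting finite Boolean algebra, and needs a delicate limit computation (the paper's Claim 1) to control the error; your route also uses only positivity of $I$ at this stage, not order-continuity. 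Likewise, dropping the constant symbols $c_a$ and transporting $\bar\mu$ by pushing forward along the evaluation map into $\mathbb{R}^{\mathcal{A}^{b}}$ is a legitimate substitute for the paper's device of embedding $X$ into the model via the axioms $\e(c_a,c_b)=0$ and $R_f(c_a)=f(a)$; both reduce the problem to showing that the copy of $X$ has full outer measure.

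The genuine gap is in that crux step. You assert that full outer measure of $e_X(X)$ is equivalent to (``i.e.'') the statement that $\lambda$ charges no basic box disjoint from $e_X(X)$, and your argument only proves the latter. These are not equivalent: $\mathbb{Q}\cap[0,1]$ meets every nonempty open interval, yet has outer Lebesgue measure $0$, not $1$. Full outer measure requires that \emph{every} countable cover $\{U_i\}$ of $e_X(X)$ by sets from the generating algebra satisfy $\sum_i\lambda(U_i)\geqslant 1$, and a cover of small total mass need not leave behind any single non-null box disjoint from $e_X(X)$; so nullity of disjoint boxes plus the normalization of covers by Lemma \ref{changecover} does not ``yield $\lambda^{*}(e_X(X))=1$'' as you claim. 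What is missing is the positive estimate of the paper's Claim 2: pull the normalized cover back to a cover $\{V_i\}$ of $X$ by cozero sets of functions in $\mathcal{A}^{b}$, take the increasing approximations $f_{i,n}\uparrow\chi(V_i)$ of Lemma \ref{tendtochar} with supports inside $V_i$, observe that $g_n:=f_{1,n}\vee\cdots\vee f_{n,n}\uparrow 1_X$ so that order-continuity gives $I(g_n)\to 1$, and then bound $I(g_n)=\int R^{M}_{g_n}\,d\bar\mu\leqslant\sum_{i\leqslant n}\int R^{M}_{f_{i,n}}\,d\bar\mu\leqslant\sum_{i\leqslant n}\lambda(U_i)$, the last inequality being the ``special pair'' domination argument (your single-box computation contains its germ, but applied to empty intersections rather than to the members of a cover). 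Without this chain the transport step, and hence the theorem, is not established. The remaining pieces of your proposal (truncation to $\mathcal{A}^{b}$, well-definedness of the pullback of the subspace measure along the possibly non-injective $e_X$, the use of Proposition \ref{Fremlin subspace1}, and the extension from $\mathcal{A}^{b}$ to $\mathcal{A}$ by monotone convergence plus order-continuity) are fine.
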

\begin{proof} A standard argument shows that if $I(f)=\int fd\mu$ holds for every bounded function
$f \in \Ac$, then it holds for every function in $\Ac$ as well.
Also if $I(1)=0$, then it is not hard to see that $I(f)=0$ for every $f \in \mathcal{A}$ which gives rise to a measure $\mu$ with $\mu(X)=0$.
So, we may assume, without loss of generality, that $\Ac$ is a vector lattice of bounded functions and that $I(1)=1$.

Let $\mathcal L$ be a language (in integration logic) consisting of a constant symbol $c_a$ for each
$a\in X$ and a unary relation symbol $R_f$ for every $f \in \mathcal{A}$. Also we let universal bound of each relation symbol $R_f$ to be equal to an arbitrary upper bound of the function $|f|$.
In particular, for every $r \in \mathbb{R}$, $R_r$ is the relation symbol $R_f$ where $f$ is the constant function $r$.
Let $T$ be a $\mathcal{L}$-theory consisting of the following expressions (axioms) which can be written as some statements in integration logic in language $\mathcal{L}$ (one can get help from Remark \ref{somesampleexpresibleproperties} for stating them).

\begin{enumerate}
\item{$\e(c_a,c_b)=0$ \ \ \ \ \ \ (for every distinct $a,b\in X$),} \label{Daniellaxiomconstantsseparated}
\item{$R_f(c_a)=f(a)$ \ \ \ \ \ \ (for each $a \in X$ and $f \in \mathcal{A}$),}\label{Daniellaxiomforconstants}
\item{$R_r(x) \stackrel{a.e}{=} r$ \ \ \ \ \ \ (for each $r \in \mathbb{R}$),}\label{DaniellaxiomforconstantR_r_is_r}
\item{$R_{f + g} \stackrel{a.e}{=} R_f + R_g$  \ \ \ \ \ \ (for every $f,g \in \mathcal{A}$),}\label{DaniellStoneaxiomRf+g=Rf+Rg}
\item{$R_{rf} \stackrel{a.e}{=} rR_f$  \ \ \ \ \ \ (for each $f\in \mathcal{A}$ and $r\in \mathbb{R}$),}\label{DaniellStoneaxiomRrf=rRf}
\item{$R_{f \vee g} \stackrel{a.e}{=} R_f \vee R_g$ \ \ \ \ \ \ (for every $f,g \in\mathcal{A}$),}\label{Daniellaxiomforunion}
\item{$\int R_f=I(f)$ \ \ \ \ \ \ (for each $f \in \mathcal{A}$).}\label{DaniellStoneaxiomintRf=If}
\end{enumerate}

Note that in axioms
\ref{DaniellStoneaxiomRf+g=Rf+Rg}, \ref{DaniellStoneaxiomRrf=rRf} and \ref{Daniellaxiomforunion},
the notations $+$, scalar multiplication $rf$ and $\vee$ in the lefts sides of the equalities refer to the corresponding operations in the vector lattice $\mathcal{A}$ while in the right sides address to the logical connective "+", "." and "max" between formulas as defined in Definition \ref{defformulasinintegrallogic} and after it.

We first want to prove that $T$ is finitely satisfiable. 
For that, we instead start to show that $T$ is finitely approximately satisfiable (as defined before Lemma \ref{approximate}),
which by Lemma \ref{approximate} amounts to saying that $T$ is finitely satisfiable.
Let $T_0$ be a finite subset of axioms of $T$ and $f_1,f_2,...,f_t$ be the list
of functions in $\mathcal{A}$ for which $R_{f_i}$'s appear in axiom \ref{DaniellStoneaxiomintRf=If} in $T_0$.
We show that $T_0$ is approximately satisfiable.
Without loss of generality, we may assume that the function $1_X$ is among $f_1,\ldots,f_t$ (otherwise, we can add the instance of axiom \ref{DaniellStoneaxiomintRf=If} for the function $1_X$ to the current list of our axioms in $T_0$ and prove approximate satisfiability of this larger set of axioms, which of course follows the approximate satisfiability of $T_0$).
So, without loss, we assume that $f_1=1_X$.
Fix $0<\epsilon<\frac{1}{2}$.
We will try to build a $\mathcal{L}$-structure on the domain set $X$ which $\epsilon$-approximately (with error at most $\epsilon$) satisfies $T_0$
(for example, if the axiom $"\int R_{f_2}=I(f_2)"$, the instance of axiom \ref{DaniellStoneaxiomintRf=If} for $f_2$, belongs to $T_0$, then we would have to show that $\int R_{f_2} \leqslant I(f_2) \pm \epsilon$ holds in the structure).
In order to build such a $\mathcal{L}$-structure, it would be enough to interpret every relation symbols $R_{f}$ and constant symbols $c_a$ in $X$ and also define a measure on $X$ in such a way that the axioms of $T_0$ hold with error at most $\epsilon$.
For that, interpret each relation symbol $R_{f} \in \mathcal{L}$ 
 by the function $f$ on $X$ itself. 
Moreover, for each $a \in X$, interpret the constant symbol $c_a$ in $X$ 
by the element $a$ itself.
It is easy to see that the equality of any instance of the
axioms \ref{Daniellaxiomconstantsseparated}, \ref{Daniellaxiomforconstants}, \ref{DaniellaxiomforconstantR_r_is_r}, \ref{DaniellStoneaxiomRf+g=Rf+Rg}, \ref{DaniellStoneaxiomRrf=rRf} and \ref{Daniellaxiomforunion}
appearing in $T_0$ holds (in exact way, which is even stronger than a.e) 
in this structure.
For axiom \ref{DaniellStoneaxiomintRf=If}, we need
a probability measure $\lambda$ on $X$
constructed in a way that the instances of axiom \ref{DaniellStoneaxiomintRf=If}, where $R_{f_i}$'s in them are interpreted by $f_i$'s, are satisfied by error at most $\epsilon$ (or equivalently $|\int f_i \ d\lambda-I(f_i)| \leqslant \epsilon$ for each $1 \leqslant i \leqslant t$).
Note that because of linearity of integral and $I$, obviously every such $\lambda$ on $X$ which satisfies inequalities $|\int f_i \ d\lambda-I(f_i)| \leqslant \epsilon$, also satisfies inequalities $|\int f'_i \ d\lambda-I(f'_i)| \leqslant \epsilon$ and vice versa, where $f'_i:=f_i+c$ for each $i$ and $c$ is a big enough positive real number such that each $f'_i$ is a positive-valued function (recall that $f_i$'s are bounded functions).
So we may assume from beginning that $f_1,\ldots,f_t$ are positive-valued functions.
We construct the measure $\lambda$ as follows .

Let $J:=[0,\alpha)$ contains the range of every $f_i$ ($1 \leqslant i \leqslant t$).
Use Lemma \ref{nullpoints} to find a partitioning $[u_1,u_2), [u_2,u_3),\ldots,[u_{s-1},u_s)$ of $J$, with $u_1=0$ and $u_s=\alpha$, such that each interval piece has length less that $\epsilon$ and each $u_j$ is an inessential value of each $f_i$ with respect to $I$ (as defined in Remark \ref{DecSeqCores}). 
Note that $u_1$ and $u_s$ are automatically inessential values of every $f_i$ since they are not in range of them.
Denote each interval $[u_j,u_{j+1})$ by $J_j$. Also denote the open interval $(u_j,u_{j+1})$ by $J^o_j$.
Let $\Bc_0$ be the Boolean algebra on $X$ generated by the family of subsets 
$f_i^{-1}(J_j)$ of $X$.
Also let $\mathcal P:=\{P_1,...,P_\ell\}$ be the family of atoms of the Boolean algebra $\Bc_0$. 
Clearly $\mathcal P$ is a partitioning for $X$.
Also it is not hard to see that for each $P_k$ we have
$P_k=\bigcap_{i=1}^{t}f_i^{-1}(J_{j_{k,i}})$ where $j_{k,i} \in \{1,\ldots,s-1\}$ for each $i$.
For each such $P_k$, we define $P^*_k:=\bigcap_{i=1}^{t}f_i^{-1}(J^o_{j_{k,i}})$.
Obviously, $P^*_k \subseteq P_k$ for each $k$.
Lemma \ref{tendtochar} gives us for each $P^*_k$ a particular sequence $(\xi^k_{n})_{n < \omega}$ of $[0,1]$-valued functions in $\Ac$ increasing pointwise to
$\chi(P^*_k)$ in such a way that the support of each function $\xi^k_{n}$ is a subset of $P^*_k$.
For each $P_k$, set $\lambda_0(P_k):=\lim_{n \rightarrow \infty} I(\xi^k_{n})$. Since $P_k$'s are the atoms of the Boolean algebra $\Bc_0$, $\lambda_0$ extends in the natural way to a measure (still denoted by $\lambda_0$) on the $\sigma$-algebra generated by $\Bc_0$, which is the same as $\Bc_0$ since $\Bc_0$ is finite.
We normalize the measure $\lambda_0$ and turn it to a probability measure $\lambda$ on $X$. Note that as we will see later after Claim 1, we have $1-\epsilon \leqslant \lambda_0(X)$ which follows that $\lambda_0(X) \not=0$. So normalization makes sense.
Now it is easy to see that, by using this measure $\lambda$, in fact we have obtained a $\mathcal{L}$-structure on the domain $(X,\Bc_0,\lambda)$.

Now it's time to verify that axiom \ref{DaniellStoneaxiomintRf=If} holds $\epsilon$-approximately in this obtained $\mathcal{L}$-structure. 
We remind that verifying this, completes the proof of $\epsilon$-approximately satisfiability of $T_0$.
For that, we must show that for each $i=1,\ldots,t$,
$|I(f_i) - \int f_i\ d\lambda| \leqslant \epsilon$.
It is easy to see that by the way we have defined $P_k$'s in above, for each $P_k$ and every $1 \leqslant i \leqslant t$ and every $x,y \in P_k$, we have $|f_i(x)-f_i(y)|< \epsilon$ (since $f_i(x)$ and $f_i(y)$ both belong to $J_{j_{k,i}}$).
Therefore, for every $i=2,\ldots,t$, we can find some nonnegative $\mathcal P$-simple function $h_i$ on $X$ (i.e. a function which has constant values on each $P_k$ but possibly different values on different $P_k$'s), not necessarily in $\mathcal{A}$, such that $0 \leqslant f_i(x)-h_i(x) \leqslant \epsilon$ for every $x \in X$.
In the particular case $i=1$, we define $h_1$ to be specifically the constant function $1_X$, which is clearly a $\mathcal P$-simple function and also satisfies $0 \leqslant f_1-1_X \leqslant \epsilon$ for every $x \in X$ (since $f_1=1_X$ as defined above).
Let $r_{i,k}$ be the constant value of $h_i$ on $P_k$. Hence, for each $1 \leqslant i \leqslant t$ we have $h_i= \sum_{k=1}^{\ell}r_{i,k}.\chi(P_k)$ where $r_{i,k}$'s are non-negative.
We have
$$|I(f_i) - \int f_i\ d\lambda| \leqslant |I(f_i) - \int h_i\ d\lambda|+ |\int f_i\ d\lambda-\int h_i\ d\lambda| = 
|I(f_i) - \int h_i\ d\lambda| + |\int (f_i-h_i)\ d\lambda| \leqslant |I(f_i) - \int h_i\ d\lambda|+\epsilon.$$
Therefore, in order to verify that axiom \ref{DaniellStoneaxiomintRf=If} holds $\epsilon$-approximately, it is enough to show that for each $i=1,\ldots,t$, $|I(f_i) - \int h_i\ d\lambda| \leqslant \epsilon$ (which, in turn follows that $|I(f_i) - \int f_i\ d\lambda| \leqslant 2\epsilon$ and then, by a suitable arrangement of $\epsilon$ in the beginning and replacing it by $\frac{\epsilon}{2}$, we get $|I(f_i) - \int f_i\ d\lambda| \leqslant \epsilon$ as desired).
So, we start to show that $|I(f_i) - \int h_i\ d\lambda| \leqslant \epsilon$ for each $i$.

We remind from above that for each $h_i$ we have $h_i= \sum_{k=1}^{\ell}r_{i,k}.\chi(P_k)$.
For each $1 \leqslant i\leqslant t$ and $n<\omega$ define $h_{i,n}:= \sum_{k=1}^{\ell}r_{i,k}.\xi^k_{n}$.
Since each $(\xi^k_{n})_{n < \omega}$ is (as defined above) an increasing sequence of $[0,1]$-valued functions in $\mathcal{A}$ converging to $\chi(P^*_k)$ with supports inside $P^*_k$ (where $P^*_k \subseteq P_k$) and $r_{i,k}$'s are non-negative, for every $i$, the sequence $(h_{i,n})_{n<\omega}$ is an increasing sequence of functions in $\mathcal{A}$ and for every point $x$ in each $P^*_k$, we have
$h_i(x)=\lim_{n \rightarrow \infty} h_{i,n}(x)$. Also for every $x$ outside of all $P^*_k$'s, we have $\lim_{n \rightarrow \infty} h_{i,n}(x)=0$.
Recall from above that $0 \leqslant f_i(x)-h_i(x) \leqslant \epsilon$ for every $x \in X$.
Now it is not difficult to see that for each $i$, the sequence $(f_i - h_{i,n})_{n<\omega}$ is a decreasing sequence of nonnegative functions.
For each 
$i=1,\ldots,t$, let $R_{i,n}:=(f_i - h_{i,n})\vee\epsilon$. So $(R_{i,n})_{n<\omega}$ is a decreasing sequence of nonnegative functions too.
Also for each $k$ and $x \in P^*_k$, the sequence $R_{i,n}(x)$ decreases to $\epsilon$ as $n$ tends to infinity.
It is easy to see that each $R_{i,n}$ is a function in $\mathcal{A}$.

\vspace{1mm}

\textit{Claim 1.} Let $i_0 \in \{1,\ldots,t\}$ be arbitrary. 
Then $\lim_{n \rightarrow \infty}I(R_{i_0,n}) = \epsilon$.

\vspace{1mm}

\textit{Proof of Claim 1.}
Fix an arbitrary small $\delta >0$. Let $H=\bigcup_{i=1}^{t}\bigcup_{j=1}^{s}f_i^{-1}(\{u_j\})$.
Also let $(\psi_n^{i,j})_{n<\omega}$ 
be the decreasing sequence corresponding to $u_j$ for $f_i$ (as defined in Remark \ref{DecSeqCores})
converging to $\chi(f_i^{-1}(\{u_j\}))$. Thus, each $\psi_n^{i,j}$ is a $[0,1]$-valued function in $\mathcal{A}$.
It is not hard to see that the sequence $(v_n)_{n<\omega}$ defined by $v_n:=\max_{i,j}\psi_n^{i,j}$ is a sequence of $[0,1]$-valued functions decreasing to $\chi(H)$ pointwise.
So, it is easy to see that for every $n < \omega$ and $x \in H$, $v_n(x)=1$.
Since every $u_j$ is an inessential value of each $f_i$ with respect to $I$ (see in above the way that $u_j$'s were defined), then for every $i,j$, we have $I(\psi_n^{i,j}) \downarrow 0$ as $n$ tends to infinity.
So, it is not difficult to verify that $I(v_n) \downarrow 0$ as $n$ tends to infinity.
Hence, by replacing the sequence $(v_n)_{n<\omega}$ with a suitable subsequence of it, we may assume, without loss, that $I(v_n)\leqslant \frac{\delta}{4^n}$ for each $n$.
For every $n <\omega$, define the function $g_n \in \mathcal{A}$ by $g_n:=\max_{m \leqslant n} m.v_m$.
Then, it is easy to see that $(g_n)_{n<\omega}$ is an increasing sequence of nonnegative functions with $g_n(x)=n$ at each $x \in H$. It makes $(g_n)_{n<\omega}$ increasing to $\infty$ at each $x \in H$. Since $v_m$'s are $[0,1]$-valued functions,
$g_n \leqslant \sum_{m \leqslant n} m.v_m$ for each $n$.
Therefore, for every $n$ we have
$$I(g_n) \leqslant I(\sum_{m \leqslant n} m.v_m)= \sum_{m \leqslant n} I(m.v_m) \leqslant \sum_{m \leqslant n} m.\frac{\delta}{4^m} \leqslant \delta.$$

Now we want to show that 
$(R_{i_0,n}-g_n) \vee \epsilon \downarrow \epsilon$ as $n$ tends to infinity at every $x \in X$.
Note that for each $n$, $(R_{i_0,n}-g_n) \vee \epsilon$ is a function in $\mathcal{A}$.
Since $(R_{i_0,n})_{n<\omega}$ is a decreasing sequence and $(g_n)_{n<\omega}$ increases to $\infty$ at each $x \in H$, we have $(R_{i_0,n}-g_n) \downarrow -\infty$ on $H$ as $n$ tends to infinity.
It is easily seen that for each $k=1,\ldots,\ell$, we have $P_k \setminus P^*_k \subseteq H$. 
So $(R_{i_0,n}-g_n) \vee \epsilon \downarrow \epsilon$ as $n$ tends to infinity at every $x \in \bigcup_{k=1}^{\ell} (P_k \setminus P^*_k)$.
On the other hand, we remind from above that for each $k$ and $x \in P^*_k$, the sequence $R_{i_0,n}(x)$ decreases to $\epsilon$ as $n$ tends to infinity.
Therefore, since $(g_n)_{n<\omega}$ is an increasing sequence of nonnegative functions, $(R_{i_0,n}-g_n) \vee \epsilon \downarrow \epsilon$ as $n$ tends to infinity at every $x \in \bigcup_{k=1}^{\ell} P^*_k$.
Combining the above facts, we have $(R_{i_0,n}-g_n) \vee \epsilon \downarrow \epsilon$ as $n$ tends to infinity at every $x \in  (\bigcup_{k=1}^{\ell} (P_k \setminus P^*_k)) \bigcup (\bigcup_{k=1}^{\ell} P^*_k)=X$.
It follows, by order-continuity of $I$, that $I((R_{i_0,n}-g_n) \vee \epsilon) \downarrow \epsilon$.
Hence, there exists $N_{\delta} \in \mathbb{N}$ such that for each
$n > N_{\delta}$, $I((R_{i_0,n}-g_n) \vee \epsilon)\leqslant \epsilon+\delta$.
Thus, for each $n > N_{\delta}$, $I(R_{i_0,n})-I(g_n)=I(R_{i_0,n}-g_n)\leqslant I((R_{i_0,n}-g_n) \vee \epsilon) \leqslant \epsilon+\delta$.
So, since $I(g_n) \leqslant \delta$ (as proved above), we have
$I(R_{i_0,n}) \leqslant \epsilon+2\delta$ for every $n > N_{\delta}$.
Since $\delta$ was chosen arbitrarily, we have $\lim_{n \rightarrow \infty}I(R_{i_0,n}) = \epsilon$.
\hfill \textit{Claim 1} $\square$

\vspace{1.5mm}

For each $i=1,\ldots,t$, since $f_i - h_{i,n}$ is a nonnegative function (for each $n$), we have
$$|I(f_i) - \int h_i\ d\lambda_0|=|I(f_i)-\sum_{k=1}^{\ell}r_{i,k}.\lambda_0(P_k)|
=|I(f_i)-\sum_{k=1}^{\ell}r_{i,k}.\lim_n I(\xi^k_{n})|
=|\lim_n I(f_i-h_{i,n})|
\leqslant \lim_n I(R_{i,n}) = \epsilon,$$

where we used Claim 1 in the last equality and also our definition of $\lambda_0(P_k)$ defined as $\lim_{n \rightarrow \infty} I(\xi^k_{n})$ in the second equality.
Specifying the above inequality for $f_1$ and $h_1$, we get $|I(f_1) - \int h_1\ d\lambda_0| \leqslant \epsilon$ where we remind that we had assumed $f_1=1_X$ and $h_1=1_X$. 
So, since $I(1_X)=1$ and $\int h_1\ d\lambda_0=\int d\lambda_0=\lambda_0(X)$, we have $|\lambda_0(X)-1| \leqslant \epsilon$.
Thus, $1-\epsilon \leqslant \lambda_0(X) \leqslant 1+\epsilon$.
Since $\lambda$ is the normalization of $\lambda_0$, we have $\lambda=\frac{1}{\lambda_0(X)}\lambda_0$. So
$\frac{1}{1+\epsilon} \lambda_0 \leqslant \lambda \leqslant \frac{1}{1-\epsilon} \lambda_0$.
Now, using above inequalities, for each $1\leqslant i \leqslant t$ we have
$$\big|I(f_i) - \int h_i\ d\lambda\big| \leqslant \big|I(f_i) - \int h_i\ d \lambda_0\big|+\big|\int h_i\ d\lambda_0 - \int h_i\ d\lambda\big| \leqslant \epsilon +\big|\int h_i\ d\lambda_0 - \frac{1}{\lambda_0(X)}\int h_i\ d\lambda_0\big|$$
$$= \epsilon + \big|(1-\frac{1}{\lambda_0(X)})\big| \ \big| \int h_i\ d\lambda_0\big| \leqslant \epsilon + \big|(\frac{\lambda_0(X)-1}{\lambda_0(X)})\big| \sup h_i. \lambda_0(X)=\epsilon +\big| \lambda_0(X)-1 \big| \sup h_i \leqslant \epsilon + \epsilon\sup h_i$$
$$\leqslant \epsilon(1 + \sup f_i).$$ 
So, by a suitable arrangement for $\epsilon$ from the beginning, one guarantees the axiom \ref{DaniellStoneaxiomintRf=If} to be also approximately satisfied by error at most $\epsilon$ in the constructed $\mathcal{L}$-structure.
It follows that $T_0$ is approximately satisfiable with error at most $\epsilon$.
Consequently, since $\epsilon$ was arbitrary, $T_0$ is approximately satisfiable.
It follows that $T$ is finitely-approximately satisfiable and hence, by Lemma \ref{approximate}, finitely satisfiable.
It finishes the step of proving the finitely satisfiability of $T$.

\vspace{1.5mm}

Now, in the next step of the proof, by logical compactness theorem (Theorem \ref{logicalcompactnesstheorem}), one concludes that $T$ has a model, say $\mathcal{N}$.
Let $(N,\Bc_1,\nu_1)$ be the underlying probability space of the model $\mathcal{N}$.
Define $\mathcal{K}_0:=\{R_f^N:f \in \mathcal{A}\}$ and let $\mathcal{K}$ to be the $\mathbb{R}$-vector lattice of functions generated by $\mathcal{K}_0$ and constant functions
(so, every constant real function and every lattice-linear combination, for example $(R_{f_1}^N \vee R_{f_2}^N)+R_{f_3}^N$, belongs to $\mathcal{K}$). 
By definition of a model, interpretation of every formula 
is a measurable function with respects to the $\sigma$-algebra $\Bc_1$. 
Note that every function in $\mathcal{K}$ is the interpretation of some formula.
So, every function in $\mathcal{K}$ is measurable.
Let $\Bc \subseteq \Bc_1$ be the minimal $\sigma$-algebra on $N$ making every function in $\mathcal{K}$ 
measurable. Also let $\nu$ be the restriction of $\nu_1$ to $\Bc$.
We consider the measure space $(N,\Bc,\nu)$.
By axioms
\ref{Daniellaxiomconstantsseparated} and \ref{Daniellaxiomforconstants} of $T$ and the fact that our model satisfies them, it is not hard to see that we may assume, without loss, that $X \subseteq N$ (by identifying every $a \in X$ with the interpretation of the constant symbol $c_a$ in $N$) and that each $h\in \mathcal{A}$ is the restriction of $R^N_h$ to $X$.
It easily follows that if we take any member of $\mathcal{K}$, say $\theta:=\sigma(R_{h_1}^N,\ldots,R_{h_m}^N)$ for some lattice-linear combination $\sigma$ of $R_{h_1}^N,\ldots,R_{h_m}^N$ for some $h_1,\ldots,h_m \in \mathcal{A}$, then the restriction $\theta|_X$ is exactly the function 
$\sigma(h_1,\ldots,h_m)$ which is a function in $\mathcal{A}$.
Moreover, by using the axioms, it is easy to see that $R_{\sigma(h_1,\ldots,h_m)}^N \stackrel{a.e}{=} \theta$.
In other words, for every $\theta \in \mathcal{K}$, we have $R_{\theta|_X}^N \stackrel{a.e}{=} \theta$.

\vspace{1mm}

Let $\mu$ be the subspace measure on $X$ induced by $\nu$. We remind that the construction of subspace measures was briefly reviewed in 
Subsection \ref{prelimmeasureintlog}.

\vspace{1mm}

\textit{Claim 2.} We have $\mu(X) = 1$, which amounts to saying that $X$ has full outer measure in $N$ with respect to the measure $\nu$.

\vspace{1mm}

\textit{Proof of Claim 2.}
Let $\mathcal{C}$ be the Boolean algebra
generated by the sets $\theta^{-1}(r,\infty)$ in $N$ where $\theta \in \mathcal{K}$ and $r \in \mathbb{R}$.
Note that for every $\theta \in \mathcal{K}$ and $r \in \mathbb{R}$, we have $\theta^{-1}(r,\infty)=\theta'^{-1}(0,\infty)$ where $\theta'=\theta-r$, and since $\mathcal{K}$ is a vector lattice, $\theta' \in \mathcal{K}$.
So $\mathcal{C}$ is the Boolean algebra generated by the sets $\theta^{-1}(0,\infty)$ in $N$ where $\theta \in \mathcal{K}$.
By the minimality of $\Bc$ mentioned above, it is easily seen that $\Bc$ is the $\sigma$-algebra generated by $\mathcal{C}$.
So, since $\nu$ is $\sigma$-finite, by a usual extension theorem in measure theory (see for example Theorem A p.54 of \cite{HalmosMeasuretheory}), there exists a unique
extension of $\nu |_{\mathcal{C}}$ to $\mathcal{B}$ and it is $\nu$ itself.
But, on the other hand, Carath\'eodory extension theorem (Theorem \ref{caratheodoryextensiontheorem}) extends $\nu |_{\mathcal{C}}$ to $\mathcal{B}$.
It follows that $\nu$ on $\mathcal{B}$ is the same as the measure obtained by the Carath\'eodory extension process from $\nu |_{\mathcal{C}}$.
Hence, by Carath\'eodory extension process explained in the beginning of the paper, for each $U \in \mathcal{B}$ we have
$$\nu(U)=\inf \Big\{\sum_{i < \omega} \nu(U_i): \ \ U \subseteq \bigcup_{i < \omega} U_i, U_i \in \mathcal{C}\Big\}.$$

Now by definition of subspace measure and above facts, we have
$$\mu(X)=\inf \Big\{ \nu(U): X \subseteq U \in \mathcal{B}\Big\}=\inf \Big\{ \inf \Big\{\sum_{i < \omega} \nu(U_i): \ \ U \subseteq \bigcup_{i < \omega} U_i, U_i \in \mathcal{C}\Big\}: X \subseteq U \in \mathcal{B} \Big\}$$
$$=\inf \Big\{ \sum_{i < \omega} \nu(U_i): \exists U \ s.t \ X \subseteq U \in \mathcal{B}, \ \ U \subseteq \bigcup_{i < \omega} U_i, U_i \in \mathcal{C} \Big\}=\inf \Big\{ \sum_{i < \omega} \nu(U_i): X \subseteq \bigcup_{i < \omega} U_i, U_i \in \mathcal{C} \Big\}.$$

Let $\{U_i\}_{i<\omega} \subseteq \Cc$ be a covering of $X$.
To complete the proof of Claim 2, it is enough to show that $1 \leqslant \sum_{i<\omega}\nu(U_i)$.
As mentioned above, $\mathcal{K}$ is a $\mathbb{R}$-vector lattice of real measurable functions on $(N,\mathcal{B},\nu)$ and $\mathcal{C}$ is the Boolean algebra generated by the sets $\theta^{-1}(0,\infty)$ in $N$ where $\theta \in \mathcal{K}$.
So, by applying Lemma \ref{changecover}, 
for every $\epsilon>0$, one can find a countable covering of $X$ of subsets of $N$ of the form $\theta^{-1}(0,\infty)$ with $\theta \in \mathcal{K}$ with the property $\nu(\theta^{-1}(\{0\}))=0$ in such a way that sum of their $\nu$-measures does not differ from sum of $\nu$-measures of $U_i$'s with more than $\epsilon$.
Thus, if we manage to prove that for each $\epsilon >0$, sum of the $\nu$-measures of members of such mentioned covering corresponding to $\epsilon$ obtained by Lemma \ref{changecover} is at least $1$, then we conclude that $1 \leqslant \sum_{i<\omega}\nu(U_i)$ as desired and we would be done. 
So, by abuse of notations, we may assume from the beginning that $\{U_i\}_{i<\omega}$ is such a covering and for each $i$, $U_i=\theta_i^{-1}(0,\infty)$
for some $\theta_i \in \mathcal{K}$ and moreover, $\nu(\theta_i^{-1}(\{0\}))=0$.
So now, we only need to show that in this particular covering of $X$ with the mentioned properties, $1 \leqslant \sum_{i<\omega}\nu(U_i)$ holds.

Define $V_i:=U_i \cap X$ for each $i$. Then, clearly $\{V_i\}_{i<\omega}$ is a covering for $X$.
Also, for each $i <\omega$, we have $V_i=f_i^{-1}(0,\infty)$ where $f_i:=\theta_i|_X$. 
We remind that we are viewing (by using axioms \ref{Daniellaxiomconstantsseparated} and \ref{Daniellaxiomforconstants}) $X$ as a subset of $N$ and moreover, 
as mentioned above, restriction of any member of $\mathcal{K}$ to $X$ belongs to $\mathcal{A}$.
So we have $f_i \in \mathcal{A}$ for each $i<\omega$.
Also, as we had mentioned earlier, we have $R_{f_i}^{N} \stackrel{a.e}{=} \theta_i$.
By Lemma \ref{tendtochar}, for each $i<\omega$, there is a particular increasing sequence $(f_{i,n})_{n<\omega}$
of $[0,1]$-valued functions in $\mathcal{A}$ converging to $\chi(V_i)$ pointwise
and that the support of each function in the sequence is a subset of $V_i$.

\vspace{1mm}

\textit{Subclaim.} For each $i,n < \omega$, we have $\int R_{f_{i,n}}^{N}\ d\nu \leqslant \nu(U_i)$.

\vspace{1mm}

\textit{Proof of Subclaim.}
We first define some notions.
We call a pair $(f,g)$ of real-valued functions on a domain set a special pair if for all $x$ in the domain, firstly, $0 \leqslant f(x) \leqslant 1$, and secondly, if $g(x) <0$ then $f(x)=0$.
If the domain is a measure space, we call a pair $(f,g)$ almost special if the same conditions hold 
when we replace "for all $x$" with "for almost all $x$ with respect to the measure on the domain".
Also for every two functions $f$ and $g$ 
over a domain, we use the notation $f * g$ for denoting the function
$(f \vee (-g \vee 0))-f-(-g \vee 0)$. 
It is not difficult to see that a pair $(f,g)$ of functions on a domain is a special pair if and only if
$f*g=0$ and $0 \leqslant f \leqslant 1$ at every point.
Similarly, it is not hard to verify that a pair $(f,g)$ of functions on a measure space is an almost special pair if and only if
$\int |f*g|=0$, $\int (f \wedge 0)=0$ and $\int ((f \vee 1)-1)=0$.
Note that by using the axioms of theory $T$, it is not hard to see that for every $f,g \in \mathcal{A}$, we have $R_{f}*R_{g} \stackrel{a.e}{=} R_{f*g}$.
Also we remind that by axiom \ref{DaniellaxiomforconstantR_r_is_r}, for every constant function $r$, we have $R_r \stackrel{a.e}{=} r$.

Now we start to show that for any $i$ and $n$, we have $\int R_{f_{i,n}}^{N}\ d\nu \leqslant \nu(U_i)$.
Fix any arbitrary $i$ and $n$.
Since 
$f_{i,n}$ is $[0,1]$-valued and takes value $0$ outside $V_i$ and also $V_i=f_i^{-1}(0,\infty)$ (see above), the pair $(f_{i,n},f_i)$ is a special pair (on the domain $X$).
So, $f_{i,n}*f_i=0$ and $0 \leqslant f_{i,n} \leqslant 1$ at every point.
We claim that $(R_{f_{i,n}}^{N},R_{f_i}^{N})$ is an almost special pair on $(N,\mathcal{B},\nu)$.
In order to show this, we verify the equivalent condition to being almost special mentioned in the previous paragraph.
By using the axioms and above facts, we have
$$\int |R_{f_{i,n}}^{N}*R_{f_i}^{N}| \ d \nu=\int |R_{f_{i,n}*f_i}^{N}| \ d\nu=
\int ((R_{f_{i,n}*f_i}^{N} \vee 0)-(R_{f_{i,n}*f_i}^{N} \wedge 0)) \ d\nu
=\int ((R_{f_{i,n}*f_i}^{N} \vee R_0^N) \ d\nu - \int (R_{f_{i,n}*f_i}^{N} \wedge R_0^N)) \ d\nu$$
$$=\int R_{((f_{i,n}*f_i)\vee 0)}^{N} \ d\nu - \int R_{((f_{i,n}*f_i)\wedge 0)}^{N} \ d\nu=
I((f_{i,n}*f_i) \vee 0)-I((f_{i,n}*f_i) \wedge 0)=I(0)-I(0)=0.$$
Also we have
$\int (R_{f_{i,n}}^{N} \wedge 0) \ d\nu=\int (R_{f_{i,n}}^{N} \wedge R_0^N)=\int (R_{f_{i,n}\wedge 0}^{N})=I(f_{i,n}\wedge 0)=I(0)=0$.
Similarly, 
$$\int ((R_{f_{i,n}}^{N} \vee 1)-1) \ d\nu= \int ((R_{f_{i,n}}^{N} \vee R_1^N)-R_1^N) \ d\nu =
\int R_{(f_{i,n} \vee 1)-1}^{N} \ d\nu =I((f_{i,n} \vee 1)-1)=I(1-1)=0.$$
Therefore, $(R_{f_{i,n}}^{N},R_{f_i}^{N})$ is an almost special pair on $(N,\mathcal{B},\nu)$.
We remind from above that $R_{f_i}^{N} \stackrel{a.e}{=} \theta_i$. 
Thus, it is easily seen that $(R_{f_{i,n}}^{N},\theta_i)$ is an almost special pair on $(N,\mathcal{B},\nu)$.
Hence, for $\nu$-almost all $x \in N$, if the function $\theta_i$ has negative value on $x$, then the function $R^{N}_{f_{i,n}}$ takes value $0$ on that $x$.
It follows that $\int_{U'_i} R_{f_{i,n}}^{N}\ d\nu=0$ where $U'_i:=\theta_i^{-1}(-\infty,0)$.
Furthermore, as mentioned before, we have $\nu(U''_i)=0$ where 
$U''_i:=\theta_i^{-1}(\{0\})$. 
We remind that $U_i=\theta_i^{-1}(0,\infty)$.
So, we have
$$\int_N R_{f_{i,n}}^{N}\ d \nu = \int_{U_i} R_{f_{i,n}}^{N}\ d\nu+\int_{U'_i} R_{f_{i,n}}^{N}\ d\nu+\int_{U''_i} R_{f_{i,n}}^{N}\ d\nu=\int_{U_i} R_{f_{i,n}}^{N}\ d\nu
\leqslant \nu(U_i),$$
where in the last inequality, we used the fact that $R_{f_{i,n}}^{N}$ is $[0,1]$-valued almost everywhere (as mentioned above).
It completes the proof of the subclaim.
\hfill \textit{Subclaim} $\square$

\vspace{1.5mm}

For every $n < \omega$, define $g_n:=f_{1,n}\vee\ldots\vee f_{n,n}$.
Obviously, every $g_n$ belongs to $\mathcal{A}$.
Since $V_i$'s cover $X$ and for each $i$, the sequence $(f_{i,n})_{n<\omega}$ increases to $\chi(V_i)$,
it is not hard to see that the sequence $(g_n)_{n<\omega}$ increases pointwise to $1_X$. Thus, by order-continuity of $I$, $\lim_{n \rightarrow \infty} I(g_n)=1$.
We remind that in the proof of the above subclaim, it was proven that for each $i$ and $n$, $(R_{f_{i,n}}^{N},R_{f_i}^{N})$ is an almost special pair on $(N,\mathcal{B},\nu)$. So, in particular,
for each $i$ and $n$, we have $0  \stackrel{a.e}{\leqslant} R_{f_{i,n}}^{N}  \stackrel{a.e}{\leqslant} 1$. 
Hence, by using axiom \ref{Daniellaxiomforunion}, we have
$R_{g_n}^{N} \stackrel{a.e}{=} \bigvee_{i=1}^n R_{f_{i,n}}^{N} \stackrel{a.e}{\leqslant} \sum_{i=1}^n R_{f_{i,n}}^{N}$.
Therefore, by using axiom \ref{DaniellStoneaxiomintRf=If}, we have
$$I(g_n)=\int R_{g_n}^{N}\ d\nu \leqslant\sum_{i=1}^n\int R_{f_{i,n}}^{N}\ d\nu.\ \ \ \ \ \ (*)$$

By combining  $(*)$ and the fact that for each $i$ and $n$ we have $\int R_{f_{i,n}}^{N}\ d\nu \leqslant \nu(U_i)$ (the above subclaim), we get

$$1=\lim_{n \rightarrow \infty}I(g_n) \leqslant \lim_{n \rightarrow \infty}(\sum_{i=1}^n\int R_{f_{i,n}}^{N}\ d\nu) \leqslant \lim_{n \rightarrow \infty}(\sum_{i=1}^n \nu(U_i))=\sum_{i<\omega}\nu(U_i).$$
It completes the proof of Claim 2.
\hfill \textit{Claim 2} $\square$

\vspace{1mm}

\vspace{1mm}

Now, since by Claim 2 the set $X$ has full outer measure in $N$, we can use Proposition \ref{Fremlin subspace1} to deduce that
$\int_X f \ d \mu=\int_X R_f^{N}|_X \ d \mu=\int_N R_f^{N} \ d \nu=I(f)$ for each $f \in \mathcal{A}$.
Finally, we consider the obtained measure $\mu$ on $X$ and restrict it to the $\sigma$-algebra $\mathcal{E}$, the smallest $\sigma$-algebra making every function in $\mathcal{A}$ measurable, and denote it by $\rho$, while it is easy to see that on $(X,\mathcal{E},\rho)$, we have $\int_X f \ d \rho=I(f)$ for each $f \in \mathcal{A}$.
It completes the proof of Daniell-Stone theorem. \hfill $\square$

\end{proof}

\subsection{Riesz representation theorem}
In this subsection, we will give a new proof for Riesz Representation theorem.
Note that there are several proofs for Riesz representation theorem via different techniques (such as classical measure theoretic techniques, nonstandard analysis approaches, etc) for example in papers
\cite{GarlingAshortproofRiesz}, \cite{GarlingAnothershortproofRiesz}, \cite{HartigTheRieszrepresentationrevisited}, \cite{RossYetanotherproof} and \cite{ZivaljevicALoebmeasureapproachtoRiesz}.
The proof of Riesz representation theorem we present here is using logic and is similar in many parts to our proof of Daniell-Stone theorem (Theorem \ref{DaniellStone}) we presented above. However, in order for reader to have the proofs of these two theorems independent of each other and also for the sake of completeness and clarity, we present the proof with details although in several parts we refer to the technicalities of the proof of Theorem \ref{DaniellStone}. 

Recall that the Baire $\sigma$-algebra of a topological space
$X$ is the smallest $\sigma$-algebra for which every element of $C(X)$
(the space of continuous functions on $X$) is measurable.
A Baire measure on a topological space is a measure on its Baire $\sigma$-algebra.
We also remind that the well-known Dini's theorem states that if $X$ is a compact topological space, and $(f_n)_{n \in \mathbb{N}}$ is a monotonically decreasing (increasing) sequence of continuous real-valued functions on $X$ converging pointwise to a continuous function $f$, then the convergence is uniform.

\begin{thm} \label{RRT}
\emph{(Riesz representation theorem)}
Let $(X,\tau)$ be a compact Hausdorff topological space and $I$ a positive
linear functional on $C(X)$. Then, there exists a Radon measure $\rho$ on
$X$ such that $I(f)=\int f \ d \rho$ for every $f \in C(X)$.
\end{thm}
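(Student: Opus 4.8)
The plan is to reduce the statement to the Daniell-Stone theorem (Theorem \ref{DaniellStone}), the two genuinely new ingredients being a verification that the hypotheses of that theorem are met and an upgrade of the resulting measure to a Radon measure at the end. Since $X$ is compact Hausdorff, the constant function $1_X$ lies in $C(X)$ and $C(X)$ is a vector lattice of bounded functions, so the abstract setting of Theorem \ref{DaniellStone} applies once we know that $I$ is a Daniell integral. Positivity and linearity are given, so the only point to check is order-continuity, and here Dini's theorem does the work: if $f_n\downarrow 0$ pointwise with each $f_n\in C(X)$, then by compactness of $X$ the convergence is uniform, whence $0\leqslant I(f_n)\leqslant \|f_n\|_\infty\, I(1_X)\to 0$. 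Thus $I$ is order-continuous, hence a Daniell integral on $C(X)$, and the whole compactness machinery of the previous subsection becomes available.

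From here one may either invoke Theorem \ref{DaniellStone} as a black box or, in the self-contained spirit of the paper, re-run its logical construction verbatim: take a constant symbol $c_a$ for each $a\in X$, a unary relation symbol $R_f$ for each $f\in C(X)$, and the seven axioms \ref{Daniellaxiomconstantsseparated}--\ref{DaniellStoneaxiomintRf=If}. Finite approximate satisfiability is established as before by cutting the common range of finitely many $f_i$ into short intervals whose endpoints are inessential values (Lemma \ref{nullpoints}), forming the atoms of the associated finite Boolean algebra, using Lemma \ref{tendtochar} to produce the increasing sequences of $[0,1]$-valued functions in $C(X)$ that approximate the relevant characteristic functions, and defining the finite measure through the limits of $I$ along these sequences; order-continuity of $I$ (just verified) is precisely what makes the analog of Claim 1 succeed. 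By Lemma \ref{approximate} the theory is finitely satisfiable, and by Theorem \ref{logicalcompactnesstheorem} it has a model on a probability space $(N,\mathcal{B},\nu)$ in which $X\subseteq N$ with $R_f^N|_X=f$. The full-outer-measure argument (the analog of Claim 2, via Lemma \ref{changecover}) shows $X$ has outer measure $1$ in $N$, so Proposition \ref{Fremlin subspace1} induces a measure $\mu$ on $X$ with $\int_X f\,d\mu=I(f)$ for every $f\in C(X)$. Restricting $\mu$ to the $\sigma$-algebra $\mathcal{E}$ generated by $C(X)$, which is exactly the Baire $\sigma$-algebra of $X$, yields a finite Baire measure $\rho_0$ representing $I$.

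The remaining step, which I expect to be the main obstacle, is the passage from this Baire measure to an honest Radon measure on the Borel $\sigma$-algebra. On a non-metrizable compact Hausdorff space the Baire and Borel $\sigma$-algebras may differ, so $\rho_0$ cannot simply be declared to be the desired measure. The plan is to invoke the classical fact that a finite Baire measure on a compact Hausdorff space extends uniquely to a regular Borel (Radon) measure $\rho$, the extension being pinned down on open sets $U$ by $\rho(U)=\sup\{I(f): f\in C(X),\ 0\leqslant f\leqslant 1,\ \operatorname{supp} f\subseteq U\}$ and propagated by outer regularity, with tightness and inner regularity coming from compactness of $X$. Since $\rho$ and $\rho_0$ then agree on all Baire sets and every $f\in C(X)$ is Baire measurable, the representation $\int f\,d\rho=\int f\,d\rho_0=I(f)$ persists, completing the proof. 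All the genuinely logical content is delivered by the compactness argument; the analytic effort concentrates in this final regularity bookkeeping, namely checking that $\rho$ is well defined, countably additive, outer and inner regular, and still represents $I$.
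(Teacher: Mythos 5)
Your proposal is correct, but it takes a more modular route than the paper. You reduce Riesz to Theorem \ref{DaniellStone} by observing that Dini's theorem makes $I$ order-continuous on $C(X)$ (if $f_n\downarrow 0$ pointwise then uniformly, so $0\leqslant I(f_n)\leqslant \|f_n\|_\infty I(1)\to 0$), after which the Daniell--Stone machinery applies as a black box and only the Baire-to-Radon extension remains. The paper deliberately does \emph{not} perform this reduction: it re-runs the entire logical construction for $C(X)$ independently, treating $I$ as not assumed order-continuous and substituting Dini's theorem at each point where order-continuity was invoked; in particular, in the full-outer-measure step (the analogue of Claim 2) it uses topological compactness of $X$ to extract a \emph{finite} subcover $V_1\cup\ldots\cup V_m$ and applies Dini to $g_n:=f_{1,n}\vee\ldots\vee f_{m,n}$ with $m$ fixed, rather than the countable-cover argument with $g_n:=f_{1,n}\vee\ldots\vee f_{n,n}$ used for Daniell--Stone. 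Your reduction is shorter and avoids duplicating the compactness-theorem argument; the paper's version buys self-containedness and keeps the two proofs independent, at the cost of repetition. For the final step both arguments coincide in substance: the paper invokes Marik's extension theorem for the unique regular Borel extension of the finite Baire measure, which is the same classical fact you appeal to (your explicit formula $\rho(U)=\sup\{I(f):0\leqslant f\leqslant 1,\ \operatorname{supp}f\subseteq U\}$ on open sets is the standard way that extension is constructed). One small point worth making explicit in your write-up is that the measure produced by Theorem \ref{DaniellStone} lives on the $\sigma$-algebra generated by $C(X)$, which is by definition the Baire $\sigma$-algebra, and is finite since $\rho_0(X)=I(1)$; with that noted, your argument is complete.
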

\begin{proof}
If $I(1)=0$, then it is not hard to see that $I(f)=0$ for every $f \in C(X)$. This case gives rise to a measure $\mu$ with $\mu(X)=0$.
So, without loss of generality, we may assume that $I(1)=1$.
Let $\mathcal L$ be the language consisting of a constant symbol $c_a$ for each
$a\in X$ and a unary relation symbol $R_f$ for each $f\in C(X)$. 
Also we let universal bound of each relation symbol $R_f$ to be equal to an arbitrary upper bound of the function $|f|$.
Let $T$ be a $\mathcal{L}$-theory consisting of the following expressions (axioms) which can be written as some closed statements in integration logic in the language $\mathcal{L}$ (the reader can get help from Remark \ref{somesampleexpresibleproperties} for stating them).

\begin{enumerate}
\item{$\e(c_a,c_b)=0$ \ \ \ \ \ \ (for every distinct $a,b\in X$),}\label{Rieszaxiomconstantsseparated}
\item{$R_f(c_a)=f(a)$ \ \ \ \ \ \ (for each $a \in X$ and $f \in C(X)$),}\label{Rieszaxiomforconstants}
\item{$R_r(x) \stackrel{a.e}{=} r$ \ \ \ \ \ \ (for each $r \in \mathbb{R}$),}\label{RieszaxiomforconstantR_r_is_r}
\item{$R_{f + g} \stackrel{a.e}{=} R_f + R_g$ \ \ \ \ \ \ (for every $f,g \in C(X)$),}\label{RieszStoneaxiomRf+g=Rf+Rg}
\item{$R_{rf} \stackrel{a.e}{=} rR_f$ \ \ \ \ \ \ (for each $f \in C(X)$ and $r \in \mathbb{R}$),}\label{RieszStoneaxiomRrf=rRf}
\item{$R_{f \vee g} \stackrel{a.e}{=} R_f \vee R_g$ \ \ \ \ \ \ (for every $f,g \in C(X)$),} \label{rieszaxiomforunion}
\item{$\int R_f=I(f)$ \ \ \ \ \ \ (for each $f \in C(X)$),}\label{RieszStoneaxiomintRf=If}
\end{enumerate}

Note that, as similarly explained in the case of Daniell-Stone theorem (Theorem \ref{DaniellStone}), in axioms \ref{RieszStoneaxiomRf+g=Rf+Rg}, \ref{RieszStoneaxiomRrf=rRf} and \ref{rieszaxiomforunion}, the notations $+$, scalar multiplication $rf$ and $\vee$ in the lefts sides of the equalities are referring the corresponding operations in $C(X)$ while in the right sides are addressing the logical connective "+", "." and "max" between formulas.

The proof of the finitely satisfiability of theory $T$ is very similar
to the proof of finitely satisfiability in the Daniell-Stone theorem case we presented before.
The main difference is that in the absence of assumption of order-continuity property for $I$, we use Dini's theorem 
and that (by compactness of $X$) uniform convergence replaces pointwise increasing/decreasing convergence, to conclude that in this case actually 
$\lim$ and $I$ still commute.

Now, in the next step of the proof, we use logical compactness theorem (Theorem \ref{logicalcompactnesstheorem}) to find a model for $T$. Let $(N,\Bc_1,\nu_1)$ be the underlying measure space of that model. Also let $\mathcal{K}$ be the $\mathbb{R}$-vector lattice of functions generated by the family 
$\{R_f^N: f \in C(X)\}$.
Note that every function in $\mathcal{K}$ is the interpretation of a formula and is measurable with respect to the $\sigma$-algebra $\Bc_1$. 
Let $\Bc \subseteq \Bc_1$ be the minimal $\sigma$-algebra on $N$ making every function in $\mathcal{K}$ measurable. Let $\nu:=\nu_1|_{\Bc}$ and consider the measure space $(N,\Bc,\nu)$.
By axioms \ref{Rieszaxiomconstantsseparated} and \ref{Rieszaxiomforconstants} of $T$ and by identifying every $a \in X$ with the interpretation of the constant symbol $c_a$ in $N$, we may assume, without loss, that $X \subseteq N$ and that each $h\in  C(X)$ is the restriction of $R^N_h$ to $X$. Also it is easy to see that restriction of any member of $\mathcal{K}$ to $X$ belongs to $C(X)$.

We aim to show that $X$ has full outer measure in $N$ with respect to the measure $\nu$.
Note that this is very similar to the proof of Claim 2 of the proof of Theorem \ref{DaniellStone}. But for the sake of completeness we give the general idea and mention some slight differences.

Let $\mathcal{C}$ be the Boolean algebra
generated by the sets $\theta^{-1}(r,\infty)$ in $N$ where $\theta \in \mathcal{K}$ and $r \in \mathbb{R}$.
Note that for every $\theta \in \mathcal{K}$ and $r \in \mathbb{R}$, we have $\theta^{-1}(r,\infty)=\theta'^{-1}(0,\infty)$ where $\theta'=\theta-r$, and since $\mathcal{K}$ is a vector lattice, $\theta' \in \mathcal{K}$.
So $\mathcal{C}$ is the Boolean algebra generated by the sets $\theta^{-1}(0,\infty)$ in $N$ where $\theta \in \mathcal{K}$.
By the minimality of $\Bc$ mentioned above, it is easily seen that $\Bc$ is the $\sigma$-algebra generated by $\mathcal{C}$.
So, since $\nu$ is $\sigma$-finite, by a usual extension theorem in measure theory, for example Theorem A p.54 of \cite{HalmosMeasuretheory}, there exists a unique
extension of $\nu |_{\mathcal{C}}$ to $\mathcal{B}$ and it is $\nu$ itself.
But, on the other hand, Carath\'eodory extension theorem (Theorem \ref{caratheodoryextensiontheorem}) extends $\nu |_{\mathcal{C}}$ to $\mathcal{B}$.
It follows that $\nu$ on $\mathcal{B}$ is the same as the measure obtained by the Carath\'eodory extension process from $\nu |_{\mathcal{C}}$.
Hence, by Carath\'eodory extension process explained in the beginning of the paper, for each $U \in \mathcal{B}$ we have
$$\nu(U)=\inf \Big\{\sum_{i < \omega} \nu(U_i): \ \ U \subseteq \bigcup_{i < \omega} U_i, U_i \in \mathcal{C}\Big\}.$$

Let $\mu$ be the induced subspace measure on $X$ by $\nu$.
As mentioned above, we want to show that $\mu(X)=1$ or equivalently, show that $X$ has full subspace measure with respect to $\nu$.
Let $\{U_i\}_{i<\omega} \subseteq \Cc$ be a covering of $X$.
Similar to the argument of Daniell-Stone theorem, it is enough to show that $1 \leqslant \sum_{i < \omega} \nu(U_i)$ and again 
by applying Lemma \ref{changecover}, we may assume that for each $i$, 
$U_i=\theta_i^{-1}(0,\infty)$ for some $\theta_i \in \mathcal{K}$ and moreover, $\nu(\theta_i^{-1}(\{0\}))=0$.
Let $V_i:=U_i \cap X$ for each $i$. Then, clearly $\{V_i\}_{i<\omega}$ is a covering for $X$.
We remind that 
$X$ is being viewed as a subset of $N$ and moreover, as mentioned above, restriction of any member of $\mathcal{K}$ to $X$ belongs to $C(X)$.
So, for each $i <\omega$, we have $V_i=f_i^{-1}(0,\infty)$ where $f_i:=\theta_i|_X \in C(X)$ and moreover, similar to the argument we used in the same part of the proof of Theorem \ref{DaniellStone}, we have $R_{f_i}^{N} \stackrel{a.e}{=} \theta_i$.
Also, the family $\{V_i\}_{i<\omega}$ forms an open covering of $X$.
Thus, by topological compactness, there exists $m < \omega$ such that
$X=V_1 \cup\ldots\cup V_m$.
By Lemma \ref{tendtochar}, for each $i \leqslant m$ there is a particular increasing sequence $(f_{i,n})_{n<\omega}$
of $[0,1]$-valued functions in $C(X)$ converging to $\chi(V_i)$ pointwise 
and that the support of each function in the sequence is a subset of $V_i$.
Now, by a very similar method to the proof of subclaim of the proof of Theorem \ref{DaniellStone}, we can show that
for each $i \leqslant m$ and $n < \omega$, $\int R_{f_{i,n}}^{N}\ d\nu \leqslant \nu(U_i)$.
For every $n < \omega$, let $g_n:=f_{1,n}\vee\ldots\vee f_{m,n}$.
Obviously, every $g_n$ belongs to $C(X)$.
Since $V_1 \cup\ldots\cup V_m$ covers $X$ and for each $i$, the sequence $(f_{i,n})_{n<\omega}$ increases to $\chi(V_i)$,
the sequence $(g_n)_{n<\omega}$ increases pointwise to $1_X$.
So, by using Dini's theorem, $(g_n)_{n<\omega}$ increases uniformly to $1_X$. Now, it can be easily shown that $\lim_{n \rightarrow \infty} I(g_n)=1$.
Again, similar to the proof of Claim 2 of Theorem \ref{DaniellStone}, for each $i \leqslant m$ and $n < \omega$, we have 
$0  \stackrel{a.e}{\leqslant} R_{f_{i,n}}^{N}  \stackrel{a.e}{\leqslant} 1$.
So by using axiom \ref{rieszaxiomforunion}, we have
$R_{g_n}^{N} \stackrel{a.e}{=} \bigvee_{i=1}^m R_{f_{i,n}}^{N}  \stackrel{a.e}{\leqslant} \sum_{i=1}^m R_{f_{i,n}}^{N}$.
Thus, by axiom \ref{RieszStoneaxiomintRf=If} and the mentioned fact that for each $i \leqslant m$ and $n < \omega$, $\int R_{f_{i,n}}^{N}\ d\nu \leqslant \nu(U_i)$, it is followed that for each $n < \omega$,
$$I(g_n)=\int R_{g_n}^{N}\ d\nu \leqslant \sum_{i=1}^m \int R_{f_{i,n}}^{N}\ d\nu \leqslant \sum_{i=1}^m \nu(U_i).$$

Since the above inequalities holds for every $n < \omega$, we have
$1=\lim_{n \rightarrow \infty}I(g_n) \leqslant \sum_{i=1}^m \nu(U_i) \leqslant \sum_{i<\omega}\nu(U_i).$
It follows that $X$ has full subspace measure in $N$ with respect to $\nu$.
Now by Proposition \ref{Fremlin subspace1}, for each $f \in C(X)$ 
$$\int_X f \ d \mu=\int_X R^{N}_f|_X \ d \mu=\int_N R^{N}_f \ d \nu=I(f).$$

By considering the construction of a subspace measure and its $\sigma$-algebra explained in 
Subsection \ref{prelimmeasureintlog},
it is not hard to see  that the $\sigma$-algebra of the subspace measure $\mu$ is exactly the same as the Baire $\sigma$-algebra on $X$.
It follows that the subspace measure $\mu$ is a Baire measure on $X$.
Marik's extension theorem (see \cite{MarikTheBaireBorelmeasure}) states that in a countably paracompact normal topological space, every Baire measure admits a unique regular Borel extension.
Therefore, in particular, $\mu$ on $X$ has a unique regular extension to a Radon measure $\rho$ on $X$. 
Also for each $f \in C(X)$, we have
$\int_X f \ d \rho=\int_X f \ d \mu=I(f)$.
It completes the proof of Riesz representation theorem.
\hfill $\square$

\end{proof}

\vspace{2mm}

{\bf Acknowledgement.}
The author is indebted to Institute for Research in Fundamental Sciences, IPM, for
support. This research was in part supported by a grant from IPM (No.98030116).

\vspace{3mm}

\bibliography{../../references/listbformeasureexistencepaper}  

\def\germ{\frak} \def\scr{\cal} \ifx\documentclass\undefinedcs
  \def\bf{\fam\bffam\tenbf}\def\rm{\fam0\tenrm}\fi 
  \def\defaultdefine#1#2{\expandafter\ifx\csname#1\endcsname\relax
  \expandafter\def\csname#1\endcsname{#2}\fi} \defaultdefine{Bbb}{\bf}
  \defaultdefine{frak}{\bf} \defaultdefine{=}{\B} 
  \defaultdefine{mathfrak}{\frak} \defaultdefine{mathbb}{\bf}
  \defaultdefine{mathcal}{\cal} \defaultdefine{implies}{\Rightarrow}
  \defaultdefine{beth}{BETH}\defaultdefine{cal}{\bf} \def\bbfI{{\Bbb I}}
  \def\mbox{\hbox} \def\text{\hbox} \def\om{\omega} \def\Cal#1{{\bf #1}}
  \def\pcf{pcf} \defaultdefine{cf}{cf} \defaultdefine{reals}{{\Bbb R}}
  \defaultdefine{real}{{\Bbb R}} \def\restriction{{|}} \def\club{CLUB}
  \def\w{\omega} \def\exist{\exists} \def\se{{\germ se}} \def\bb{{\bf b}}
  \def\equivalence{\equiv} \let\lt< \let\gt>
\begin{thebibliography}{10}
\expandafter\ifx\csname url\endcsname\relax
  \def\url#1{\texttt{#1}}\fi
\expandafter\ifx\csname urlprefix\endcsname\relax\def\urlprefix{URL }\fi

\bibitem{BagheriPourmahdianThelogicofintegration}
S.~Bagheri, M.~Pourmahdian, The logic of integration, Arch. Math. Logic 48
  (2009) 465--492.

\bibitem{FajardoModeltheoryofstochasticprocesses}
S.~Fajardo, H.~Keisler, Model theory of stochastic processes, Lecture Notes in
  Logic 14 ASL, 2002.

\bibitem{ModeltheoryofoperatoralgebrasIFarahHartSherman}
I.~Farah, B.~Hart, D.~Sherman, Model theory of operator algebras
  I: Stability, Bull. London Math. Soc. 45 (2013) 825--838.

\bibitem{FremlinMeasuretheoryvol3}
D.~Fremlin, Measure theory, vol. 3, Torres Fremlin, 2002.

\bibitem{FremlinMeasuretheoryvol2}
D.~Fremlin, Measure theory, vol. 2, Torres Fremlin, 2003.

\bibitem{GarlingAshortproofRiesz}
D.~J.~H. Garling, A 'short' proof of the Riesz representation
  theorem, Proc. Cambridge Philos. Soc. 73 (1973) 459--460.

\bibitem{GarlingAnothershortproofRiesz}
D.~J.~H. Garling, Another 'short' proof of the Riesz
  representation theorem, Math. Proc. Cambridge Philos. Soc. 99 (1986)
  261--262.

\bibitem{GoldbringTowsnerAnapproximatelogicformeasures}
I.~Goldbring, H.~Towsner, An approximate logic for measures, Israel Journal of
  Mathematics. 199~(2) (2014) 867--913.

\bibitem{HalmosMeasuretheory}
P.~Halmos, Measure theory, Princeton, Van Nostrand, 1974.

\bibitem{HartigTheRieszrepresentationrevisited}
D.~G. Hartig, The Riesz representation revisited, Amer. Math.
  Monthly 90 (1983) 277--280.

\bibitem{HooverProbabilitylogic}
D.~Hoover, Probability logic, Annals of Mathematical Logic 14 (1978) 287--313.

\bibitem{KeislerProbabilityquantifiers}
H.~Keisler, Probability quantifiers, in: Model Theoretic Logic, edited by J.
  Barwise and S. Feferman, Springer-Verlag, 1985, pp. 509--556.

\bibitem{KuyperTerwijnModeltheorymeasurespacesandprobabilitylogic}
R.~Kuyper, S.~A. Terwijn, Model theory of measure spaces and probability logic,
  The Review of Symbolic Logic 6~(3) (2013) 367--393.

\bibitem{MarikTheBaireBorelmeasure}
J.~Marik, The Baire and Borel measure,
  Czechoslovak Math. J. 7 (1957) 248--253.

\bibitem{MofidiBagheriQuantifiedunivultraproduct}
A.~Mofidi, S.~Bagheri, Quantified universes and ultraproduct, Math. Logic Quart
  58 (2012) 63--74.

\bibitem{RaskovicDordeviProbabilityquantifiersoperators}
M.~Ra\v{s}kovi\'{c}, R.~Dordevi\'{c}, Probability quantifiers and operators,
  Vesta Company, Belgrade, 1996.

\bibitem{RossYetanotherproof}
D.~Ross, Yet another short proof of the Riesz representation
  theorem, Math. Proc. Cambridge Philos. Soc 105 (1989) 261--262.

\bibitem{RoydenRealAnalysis}
H.~Royden, Real Analysis, Macmillan Co., New York., 1988.

\bibitem{TerwijnProbabilisticlogicandinduction}
S.~A. Terwijn, Probabilistic logic and induction, Journal of Logic and
  Computation 15(4) (2005) 507--515.

\bibitem{ZivaljevicALoebmeasureapproachtoRiesz}
R.~Zivaljevic, A Loeb measure approach to the Riesz
  representation theorem, Publ. Inst. Math. (Beograd) (N.S.) 32 (1982)
  175--177.

\end{thebibliography}

\bibliographystyle{elsart-num-sort}			

\end{document}